\documentclass{article}
\usepackage[text={170mm,254mm}]{geometry}
\usepackage{graphicx}
\usepackage[T1]{fontenc}
\usepackage[utf8]{inputenc}

\usepackage{amssymb, amsthm, amsfonts, amsmath, enumitem, mathtools}
\usepackage{booktabs, array, diagbox, xcolor, multicol, rotating, stmaryrd, multirow}

\usepackage{algorithmicx}
\usepackage{algpseudocode}

\usepackage[hyphens]{url}
\usepackage[colorlinks=true,citecolor=black,linkcolor=black,urlcolor=blue]{hyperref}
\usepackage{cite}

\usepackage{subcaption}
\usepackage{float}

\newcommand{\arxiv}[2]{\href{https://arxiv.org/abs/#1}{\texttt{arXiv:#1}} \texttt{[#2]}}
\newcommand{\doi}[1]{\url{https://doi.org/#1}}

\theoremstyle{plain}
\newtheorem{theorem}{Theorem}[section]
\newtheorem{lemma}[theorem]{Lemma}
\newtheorem{conjecture}[theorem]{Conjecture}
\newtheorem{proposition}[theorem]{Proposition}
\newtheorem{corollary}[theorem]{Corollary}
\theoremstyle{definition}

\newtheorem{example}[theorem]{Example}

\usepackage{listings}
\definecolor{mauve}{rgb}{0.58,0,0.82}
\definecolor{dkgreen}{rgb}{0,0.6,0}
\lstdefinestyle{pitonche} {
    language = Python,
    basicstyle = footnotesizettfamily,
    showspaces = false,
    showstringspaces = false,
    breakautoindent = true,
    flexiblecolumns = true,
    keepspaces = true,
    stepnumber = 1,
    xleftmargin = 0pt
}
\usepackage{tikz}

\let\le=\leqslant
\let\ge=\geqslant

\usepackage{authblk}

\title{Computation of small reflective and dihedral Ramsey numbers}

\author[1,2]{Ivan Damnjanović\thanks{Corresponding author. Email:\ \texttt{ivan.damnjanovic@elfak.ni.ac.rs} (I.\ Damnjanović).}}
\author[1]{Irena Đorđević}

\affil[1]{Faculty of Electronic Engineering, University of Niš, Aleksandra Medvedeva 4, Niš, 18104, Serbia}
\affil[2]{Faculty of Mathematics, Natural Sciences and Information Technologies, University of Primorska,\linebreak Glagoljaška 8, Koper, 6000, Slovenia}

\date{}

\begin{document}

\maketitle

\begin{abstract}
Throughout, all graphs are simple, finite and have vertex sets of the form
$\{ 0, 1, 2, \ldots, n - 1 \}$ for some $n \in \mathbb{N}$. For graphs $G$ and $H$, and a permutation group $\Gamma$ on the vertex set of $H$, we say that $H$ is \emph{$\Gamma$-embeddable} in $G$ if there exists a graph homomorphism from $H$ to $G$ of the form $\psi \circ \varphi$, where $\varphi \in \Gamma$ and $\psi$ is an increasing injection. Recently, standard and ordered Ramsey numbers of graphs were unified through the introduction of permutational Ramsey numbers, defined as follows. For graphs $H_1, H_2, \ldots, H_k$ and permutation groups $\Gamma_1, \Gamma_2, \ldots, \Gamma_k$ on their respective vertex sets, the \emph{permutational Ramsey number} $R(H_1^{\Gamma_1}, H_2^{\Gamma_2}, \ldots, H_k^{\Gamma_k})$ is the minimum $n \in \mathbb{N}$ such that for every $k$-edge-coloring of a complete graph on $n$ vertices, there exists some $j \in \{1, 2, \ldots, k\}$ for which $H_j$ is $\Gamma_j$-embeddable in the spanning subgraph of the complete graph comprising the edges of color $j$.

Here, we consider reflective (resp.\ dihedral) Ramsey numbers, which are a specific class of permutational Ramsey numbers in which each group $\Gamma_j$ is the reflection group (resp.\ dihedral group) on the naturally ordered vertex set of $H_j$. Focusing on the two-color case, we apply the SAT-based approach originally proposed by Poljak for ordered Ramsey numbers and recently extended to cyclic Ramsey numbers. We utilize the Kissat SAT solver to obtain exact values and lower bounds for small reflective and dihedral Ramsey numbers whose two arguments belong to the following graph classes: monotone and alternating paths, monotone cycles, start-central stars, complete graphs and nested matchings. We also derive several general results and formulate conjectures based on the computational findings. 
\end{abstract}

\bigskip\noindent
{\bf Keywords:} Ramsey numbers, reflective Ramsey numbers, dihedral Ramsey numbers, permutational Ramsey numbers, SAT.

\bigskip\noindent
{\bf Mathematics Subject Classification (2020):} 05D10, 05C55.

\section{Introduction}

We consider all graphs to be undirected, simple and finite, with vertex sets of the form $\{ 0, 1, 2, \ldots, n - 1 \}$ for some $n \in \mathbb{N}$. For a graph~$G$, we denote its vertex and edge sets by $V(G)$ and $E(G)$, respectively, and write $|G|$ for its order. We write $K_n$ for the complete graph on $n$ vertices. A $k$-edge-coloring of a graph $G$ is a mapping from $E(G)$ to $\{ 1, 2, \ldots, k \}$.

Ramsey theory originates from a theorem of Ramsey \cite{Ramsey1930}, which asserts that for any graphs $H_1, H_2, \ldots, H_k$, every $k$-edge-coloring of $K_n$ contains a monochromatic copy of $H_j$ in color $j$, for some $j \in \{ 1, 2, \ldots, k \}$, provided that $n \in \mathbb{N}$ is sufficiently large. The minimum such $n$ is called the \emph{Ramsey number} $R(H_1, H_2, \ldots, H_k)$. Although the definition of Ramsey numbers is straightforward, determining their exact values is often remarkably difficult. Indeed, even the diagonal Ramsey number $R(K_5, K_5)$ remains unknown \cite{Radziszowski}. Ramsey-type phenomena have been investigated in numerous settings over the past century, leading to a rich literature on a variety of combinatorial structures, including hypergraphs, ordered graphs, geometric configurations, and sequences; see \cite{ConFoxSu2015, ErSze1935, FraPaReiRo2018, GraRothSpe1990, LiLin2022, Morris2026, NeRo1990, Nguyen2014, Radziszowski} and the references therein.

Among the many directions in modern Ramsey theory, ordered Ramsey numbers have received considerable attention in recent years. Given graphs $H_1, H_2, \ldots, H_k$, the \emph{ordered Ramsey number} $R_\mathrm{ord}(H_1, H_2, \ldots, H_k)$ is defined as the minimum $n \in \mathbb{N}$ such that for every $k$-edge-coloring of $K_n$, there exists an increasing injective homomorphism from $H_j$ to the spanning subgraph of $K_n$ comprising the edges of color $j$, for some $j \in \{ 1, 2, \ldots, k \}$. The systematic study of ordered Ramsey numbers was initiated independently by Balko, Cibulka, Král and Kynčl \cite{BalCiKralKyn2015, BalCiKralKyn2020}, and Conlon, Fox, Lee and Sudakov \cite{ConFoxLeeSu2017}. For a selection of recent results on the asymptotic properties of ordered Ramsey numbers, see the survey \cite{Balko2025} by Balko and the references therein.

Recently, standard and ordered Ramsey numbers were unified within a group-theoretic framework through the introduction of permutational Ramsey numbers \cite{BaDamSteSto2026}. For convenience, we first introduce the following terminology. Let $G$ and $H$ be graphs, and let $\Gamma$ be a permutation group on $V(H)$. We say that $H$ is \emph{$\Gamma$-embeddable} in $G$ if there exists a graph homomorphism from $H$ to $G$ of the form $\psi \circ \varphi$, where $\varphi \in \Gamma$ and $\psi \colon V(H) \to V(G)$ is an increasing injection. Using this terminology, the definition of permutational Ramsey numbers from \cite{BaDamSteSto2026} can be stated as follows. For graphs $H_1, H_2, \ldots, H_k$ and permutation groups $\Gamma_1, \Gamma_2, \ldots, \Gamma_k$ on their respective vertex sets, the \emph{permutational Ramsey number} $R(H_1^{\Gamma_1}, H_2^{\Gamma_2}, \ldots, H_k^{\Gamma_k})$ is the minimum $n \in \mathbb{N}$ such that for every $k$-edge-coloring of $K_n$, there exists some $j \in \{1, 2, \ldots, k\}$ for which $H_j$ is $\Gamma_j$-embeddable in the spanning subgraph of $K_n$ comprising the edges of color $j$. The well-definedness of permutational Ramsey numbers follows directly from Ramsey's theorem \cite{Ramsey1930}.

Different choices of permutation groups give rise to several previously studied classes of Ramsey numbers. In particular, standard Ramsey numbers are obtained by taking each $\Gamma_j$ to be the symmetric group on $V(H_j)$, whereas ordered Ramsey numbers are obtained by taking each $\Gamma_j$ to be the trivial permutation group on $V(H_j)$. The same framework also encompasses cyclic Ramsey numbers $R_\mathrm{cyc}(H_1, H_2, \ldots, H_k)$, introduced and investigated in \cite{BaDamSteSto2026}, which arise by taking each $\Gamma_j$ to be the cyclic group generated by the cyclic shift permutation with respect to the natural vertex order, i.e., $\Gamma_j = \left\langle \begin{pmatrix}\begin{smallmatrix}
    0 & 1 & 2 & \cdots & |H_j| - 2 & |H_j| - 1\\
    1 & 2 & 3 & \cdots & |H_j| - 1 & 0
\end{smallmatrix}\end{pmatrix} \right\rangle$ for each $j \in \{ 1, 2, \ldots, k \}$.

Motivated by the results of \cite{BaDamSteSto2026} on the computation of small ordered and cyclic Ramsey numbers, we investigate two other natural classes of permutational Ramsey numbers proposed in the same paper:
\begin{enumerate}[label=\textbf{(\arabic*)}]
    \item the reflective Ramsey numbers $R_\mathrm{ref}(H_1, H_2, \ldots, H_k)$, which arise by taking each $\Gamma_j$ to be the group generated by the reflection with respect to the natural vertex order, i.e., $\Gamma_j = \left\langle \begin{pmatrix}\begin{smallmatrix}
    0 & 1 & 2 & \cdots & |H_j| - 1\\
    |H_j|-1 & |H_j|-2 & |H_j|-3 & \cdots & 0
\end{smallmatrix}\end{pmatrix} \right\rangle$ for each $j \in \{ 1, 2, \ldots, k \}$; and
    \item the dihedral Ramsey numbers $R_\mathrm{dih}(H_1, H_2, \ldots, H_k)$, which arise by taking each $\Gamma_j$ to be the group generated by the cyclic shift permutation and the reflection with respect to the natural vertex order, i.e.,
    \[
        \Gamma_j = \left\langle \begin{pmatrix}\begin{smallmatrix}
    0 & 1 & 2 & \cdots & |H_j| - 2 & |H_j| - 1\\
    1 & 2 & 3 & \cdots & |H_j| - 1 & 0
\end{smallmatrix}\end{pmatrix}, \begin{pmatrix}\begin{smallmatrix}
    0 & 1 & 2 & \cdots & |H_j| - 1\\
    |H_j|-1 & |H_j|-2 & |H_j|-3 & \cdots & 0
\end{smallmatrix}\end{pmatrix} \right\rangle
    \]
    for each $j \in \{ 1, 2, \ldots, k \}$. 
\end{enumerate}
It is natural to study reflective (resp.\ dihedral) Ramsey numbers because they correspond to graphs whose vertices are linearly (resp.\ cyclically) ordered, but the direction of the ordering is ignored, unlike in the ordered (resp.\ cyclic) Ramsey numbers.

We focus on the two-color case, with the goal of computing small reflective and dihedral Ramsey numbers whose arguments belong to the following graph classes: monotone and alternating paths, monotone cycles, start-central stars, complete graphs and nested matchings. Formal definitions of these graph classes are given in Section~\ref{sc:prel}. The Ramsey numbers are computed by formulating Boolean satisfiability (SAT) problems that encode the search for a $2$-edge-coloring of $K_n$ containing no forbidden embeddings, following the approach originally proposed by Poljak \cite{Poljak2020} for ordered Ramsey numbers and later extended to cyclic Ramsey numbers \cite{BaDamSteSto2026}. Due to its strong performance in the main track of the SAT Competition 2024, we use the Kissat SAT solver \cite{Kissat, KissatRepo} to handle all the resulting SAT instances. We automate the computation of small reflective and dihedral Ramsey numbers by using and extending the source code available at \cite{BaDamSteSto2026Repo}, originally developed for computing ordered and cyclic Ramsey numbers. We also establish several general results and formulate conjectures motivated by the computational findings.

The remainder of this paper is organized as follows. Section~\ref{sc:prel} introduces the necessary terminology, establishes some basic properties of permutational Ramsey numbers and formally defines the graph classes considered throughout the paper. Section~\ref{sc:meth} describes the SAT-based approach and the software used to compute small reflective and dihedral Ramsey numbers. Section~\ref{sc:results} presents the computational results, establishes several general results, formulates conjectures motivated by the computational findings and compares the computed Ramsey numbers with the previously known ordered and cyclic Ramsey numbers. Finally, Section~\ref{sc:conc} concludes the paper with a brief summary and directions for future research. The source code used to obtain the computational results is available at \cite{GitHub}, together with supplementary data, including the computed Ramsey numbers, the Kissat output files and all the $2$-edge-colorings yielding the lower bounds.

\section{Preliminaries}\label{sc:prel}

We begin with the following notion, originally introduced in \cite{BaDamSteSto2026}. Let $G_1$ and $G_2$ be graphs of the same order, and let $\Gamma$ be a permutation group on $V(G_1) = V(G_2)$. We say that $G_1$ is \emph{$\Gamma$-isomorphic} to $G_2$, and write $G_1 \cong_\Gamma G_2$, if there exists an isomorphism $\varphi$ from $G_1$ to $G_2$ such that $\varphi \in \Gamma$. The relation $\cong_\Gamma$ is clearly an equivalence relation. Moreover, graphs belonging to the same equivalence class are interchangeable as arguments of a permutational Ramsey number whenever the associated permutation group is $\Gamma$.

\begin{proposition}\label{graph_invar}
    Let $H_1, H_2, \ldots, H_k$ be graphs and let $\Gamma_1, \Gamma_2, \ldots, \Gamma_k$ be permutation groups on their respective vertex sets. Suppose that $F_1, F_2, \ldots, F_k$ are graphs such that $F_j \cong_{\Gamma_j} H_j$ for each $j \in \{ 1, 2, \ldots, k \}$. Then $R(F_1^{\Gamma_1}, F_2^{\Gamma_2}, \ldots, F_k^{\Gamma_k}) = R(H_1^{\Gamma_1}, H_2^{\Gamma_2}, \ldots, H_k^{\Gamma_k})$.
\end{proposition}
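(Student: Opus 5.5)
The plan is to prove the stronger statement that, for any single graph $G$ and any $j$, the graph $F_j$ is $\Gamma_j$-embeddable in $G$ if and only if $H_j$ is. Once this is known, the proposition is immediate. For each $n$ and each $k$-edge-coloring of $K_n$, let $G_j$ be the spanning subgraph of color $j$. The condition ``some $F_j$ is $\Gamma_j$-embeddable in $G_j$'' then coincides with ``some $H_j$ is $\Gamma_j$-embeddable in $G_j$''. Hence the set of $n$ for which every coloring has this property is the same in both cases, and so are the minima.

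For the key step, fix $j$ and an isomorphism $\sigma \in \Gamma_j$ from $F_j$ to $H_j$. Since $\Gamma_j$ is a group, $\sigma^{-1} \in \Gamma_j$, and $\sigma^{-1}$ is an isomorphism from $H_j$ to $F_j$. So the relation is symmetric, and it suffices to prove one direction.

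Suppose $H_j$ is $\Gamma_j$-embeddable in $G$, witnessed by a homomorphism $\psi \circ \varphi$ from $H_j$ to $G$ with $\varphi \in \Gamma_j$ and $\psi$ an increasing injection. Consider $\psi \circ (\varphi \circ \sigma)$. It is a composition of the isomorphism $\sigma \colon F_j \to H_j$ with a homomorphism $H_j \to G$, so it is a homomorphism $F_j \to G$. Moreover, $\varphi \circ \sigma \in \Gamma_j$ by closure of the group under composition. Since $V(F_j) = V(H_j)$, the map $\psi$ is still an increasing injection from $V(F_j)$ to $V(G)$. This gives $\Gamma_j$-embeddability of $F_j$ in $G$.

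There is no real obstacle. The only points needing care are the order of composition, so that the group element $\varphi \circ \sigma$ acts first and $\psi$ second, matching the form $\psi \circ \varphi$ in the definition, and the use of the group axioms (closure and inverses) for $\Gamma_j$.
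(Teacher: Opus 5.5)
Your proposal is correct and follows the paper's proof: it composes the witnessing map $\psi \circ \varphi$ with a $\Gamma_j$-isomorphism $F_j \to H_j$ to get $\psi \circ (\varphi \circ \sigma)$, with $\varphi \circ \sigma \in \Gamma_j$. You also spell out why the argument is symmetric (via $\sigma^{-1} \in \Gamma_j$), which the paper leaves as ``by symmetry.''
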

\begin{proof}
    Let $n \in \mathbb{N}$ and consider a $k$-edge-coloring of $K_n$. For each $j \in \{ 1, 2, \ldots, k \}$, let $G_j$ denote the spanning subgraph of $K_n$ comprising the edges of color $j$. Suppose that there exists $j \in \{ 1, 2, \ldots, k \}$ such that $H_j$ is $\Gamma_j$-embeddable in $G_j$. By symmetry, it suffices to show that $F_j$ is also $\Gamma_j$-embeddable in $G_j$. By the definition of $\Gamma_j$-embeddability, there exists a homomorphism from $H_j$ to $G_j$ of the form $\psi \circ \varphi$, where $\varphi \in \Gamma_j$ and $\psi \colon V(H_j) \to V(G_j)$ is an increasing injection. Since $F_j \cong_{\Gamma_j} H_j$, there exists an isomorphism $\chi \in \Gamma_j$ from $F_j$ to $H_j$. Therefore, $\psi \circ (\varphi \circ \chi)$ is a homomorphism from $F_j$ to $G_j$, with $\varphi \circ \chi \in \Gamma_j$ and $\psi$ an increasing injection. Hence, $F_j$ is $\Gamma_j$-embeddable in $G_j$.
\end{proof}

Complementing Proposition~\ref{graph_invar}, which treats changes of the arguments, we now consider changes in the associated permutation groups. The following observation establishes a monotonicity property with respect to these groups.

\begin{proposition}\label{sandwich_prop}
    Let $H_1, H_2, \ldots, H_k$ be graphs, and let $\Gamma_1, \Gamma_2, \ldots, \Gamma_k$ and $\Lambda_1, \Lambda_2, \ldots, \Lambda_k$ be permutation groups on their respective vertex sets. Suppose that $\Gamma_j \le \Lambda_j$ for each $j \in \{ 1, 2, \ldots, k \}$. Then $R(H_1^{\Lambda_1}, H_2^{\Lambda_2}, \ldots, H_k^{\Lambda_k}) \le R(H_1^{\Gamma_1}, H_2^{\Gamma_2}, \ldots, H_k^{\Gamma_k})$.
\end{proposition}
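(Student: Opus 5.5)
The plan is a direct argument from the definitions, in the same spirit as the proof of Proposition~\ref{graph_invar}. Let $n = R(H_1^{\Gamma_1}, H_2^{\Gamma_2}, \ldots, H_k^{\Gamma_k})$, which is well defined by Ramsey's theorem. To prove the inequality, I will show that every $k$-edge-coloring of $K_n$ already witnesses the defining property of $R(H_1^{\Lambda_1}, H_2^{\Lambda_2}, \ldots, H_k^{\Lambda_k})$. Since that number is the minimum $n$ with this property, it is then at most $n$.

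Fix a $k$-edge-coloring of $K_n$. For each $j \in \{1, 2, \ldots, k\}$, let $G_j$ be the spanning subgraph of $K_n$ comprising the edges of color $j$. By the choice of $n$, there is some $j$ such that $H_j$ is $\Gamma_j$-embeddable in $G_j$. That is, there is a homomorphism $\psi \circ \varphi$ from $H_j$ to $G_j$, where $\varphi \in \Gamma_j$ and $\psi$ is an increasing injection. Since $\Gamma_j \le \Lambda_j$, we have $\varphi \in \Lambda_j$, so the same map $\psi \circ \varphi$ shows that $H_j$ is $\Lambda_j$-embeddable in $G_j$.

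The key observation is that $\Gamma$-embeddability is monotone in $\Gamma$: a larger group only allows more candidate embeddings. There is no real obstacle. The only point needing care is the direction of the inequality, namely that the minimum defining the $\Lambda$-number is taken over a property implied by the $\Gamma$-property, so the $\Lambda$-number is the smaller one. One also has to note that the defining property is upward closed in $n$, but this is not needed here because we only use minimality.
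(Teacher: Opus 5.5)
Your proof is correct and takes essentially the same approach as the paper. The paper states this proposition as an immediate observation without a written proof, and your argument is the definitional one it relies on: any $\Gamma_j$-embedding is a $\Lambda_j$-embedding because $\Gamma_j \subseteq \Lambda_j$. Hence $n = R(H_1^{\Gamma_1}, \ldots, H_k^{\Gamma_k})$ has the defining property of the $\Lambda$-number, and by minimality that number is at most $n$.
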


\begin{corollary}\label{sandwich_cor}
    Let $H_1, H_2, \ldots, H_k$ be graphs and let $\Gamma_1, \Gamma_2, \ldots, \Gamma_k$ be permutation groups on their respective vertex sets. Then $R(H_1, H_2, \ldots, H_k) \le R(H_1^{\Gamma_1}, H_2^{\Gamma_2}, \ldots, H_k^{\Gamma_k}) \le R_\mathrm{ord}(H_1, H_2, \ldots, H_k)$.
\end{corollary}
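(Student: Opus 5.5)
The plan is to obtain Corollary~\ref{sandwich_cor} directly from Proposition~\ref{sandwich_prop}. We apply it twice, with the two extreme groups that the introduction identifies with the standard and the ordered setting. For each $j \in \{1, 2, \ldots, k\}$, let $S_j$ denote the symmetric group on $V(H_j)$ and $I_j$ the trivial group on $V(H_j)$. Any permutation group on $V(H_j)$ satisfies $I_j \le \Gamma_j \le S_j$.

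For the upper bound, we apply Proposition~\ref{sandwich_prop} with the trivial groups $I_j$ as the smaller groups and the groups $\Gamma_j$ as the larger ones. This gives $R(H_1^{\Gamma_1}, \ldots, H_k^{\Gamma_k}) \le R(H_1^{I_1}, \ldots, H_k^{I_k})$. By definition, $I_j$-embeddability means there is a homomorphism $\psi \circ \mathrm{id} = \psi$ with $\psi$ an increasing injection. This is exactly the condition in the definition of the ordered Ramsey number, so the right-hand side equals $R_\mathrm{ord}(H_1, \ldots, H_k)$.

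For the lower bound, we apply Proposition~\ref{sandwich_prop} with the groups $\Gamma_j$ as the smaller groups and the symmetric groups $S_j$ as the larger ones. This gives $R(H_1^{S_1}, \ldots, H_k^{S_k}) \le R(H_1^{\Gamma_1}, \ldots, H_k^{\Gamma_k})$. It then remains to check that $R(H_1^{S_1}, \ldots, H_k^{S_k}) = R(H_1, \ldots, H_k)$. This is the only step needing an argument, and I expect it to be the main obstacle. Concretely, we must show that $H_j$ is $S_j$-embeddable in $G_j$ if and only if $G_j$ contains a copy of $H_j$, i.e.\ there is an injective homomorphism from $H_j$ to $G_j$.

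One direction is immediate: $\psi \circ \varphi$ is a composition of injections, hence an injective homomorphism. For the converse, let $f \colon V(H_j) \to V(G_j)$ be an injective homomorphism. Let $\psi$ be the unique increasing injection from $V(H_j)$ onto the image $f(V(H_j))$, and set $\varphi = \psi^{-1} \circ f$. This $\varphi$ is a bijection of $V(H_j)$, so $\varphi \in S_j$, and $f = \psi \circ \varphi$, as required. Combining the two inequalities yields the claimed chain.
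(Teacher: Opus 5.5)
Your proof is correct and is the intended argument: the paper gives no separate proof and treats this corollary as an immediate consequence of Proposition~\ref{sandwich_prop} with the trivial and symmetric groups. It relies on the introduction's remark that these two choices recover the ordered and standard Ramsey numbers; your check that $S_j$-embeddability is equivalent to having an injective homomorphism, via $\varphi = \psi^{-1} \circ f$, correctly supplies the detail behind that remark.
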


Given graphs $G$ and $H$, and a permutation group $\Gamma$ on $V(H)$, it is straightforward to observe that the following two statements are equivalent.
\begin{enumerate}[label=\textbf{(\arabic*)}]
    \item $H$ is $\Gamma$-embeddable in $G$.
    \item There exists a graph $F \cong_\Gamma H$ with an increasing injective homomorphism from $F$ to $G$.
\end{enumerate}

\noindent
This yields the following result.

\begin{proposition}\label{equiv_class}
    Let $H_1, H_2, \ldots, H_k$ be graphs, and let $\Gamma_1, \Gamma_2, \ldots, \Gamma_k$ and $\Lambda_1, \Lambda_2, \ldots, \Lambda_k$ be permutation groups on their respective vertex sets. Suppose that, for each $j \in \{ 1, 2, \ldots, k \}$, $H_j$ lies in the same equivalence class under both $\cong_{\Gamma_j}$ and $\cong_{\Lambda_j}$. Then $R(H_1^{\Lambda_1}, H_2^{\Lambda_2}, \ldots, H_k^{\Lambda_k}) = R(H_1^{\Gamma_1}, H_2^{\Gamma_2}, \ldots, H_k^{\Gamma_k})$.
\end{proposition}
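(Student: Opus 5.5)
The plan is to use the equivalence stated just before the proposition. It says that $H$ is $\Gamma$-embeddable in $G$ if and only if some graph $F \cong_\Gamma H$ admits an increasing injective homomorphism into $G$. Under this reformulation, $\Gamma$-embeddability of $H_j$ depends only on the equivalence class of $H_j$ under $\cong_{\Gamma_j}$, viewed as a set of graphs on $V(H_j)$, and not on the group $\Gamma_j$ itself. I will read the hypothesis as saying that the class of $H_j$ under $\cong_{\Gamma_j}$ and its class under $\cong_{\Lambda_j}$ are the same set of graphs. Call this common set $\mathcal{C}_j$.

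First I will justify the equivalence briefly, since the paper only calls it straightforward. Suppose $\psi\circ\varphi$ is a homomorphism from $H$ to $G$ with $\varphi\in\Gamma$ and $\psi$ an increasing injection. Let $F$ be the image of $H$ under $\varphi$, so $\{\varphi(u),\varphi(v)\}\in E(F)$ exactly when $\{u,v\}\in E(H)$. Then $\varphi$ is an isomorphism from $H$ to $F$ lying in $\Gamma$. Since $\Gamma$ is a group, $\varphi^{-1}\in\Gamma$ is an isomorphism from $F$ to $H$, so $F\cong_\Gamma H$. Moreover $\psi$ is a homomorphism from $F$ to $G$. For the converse, suppose $\chi\in\Gamma$ is an isomorphism from $H$ to $F$ and $\psi$ is an increasing injective homomorphism from $F$ to $G$. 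Then $\psi\circ\chi$ is a homomorphism from $H$ to $G$ of the required form.

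Next, fix $n$ and a $k$-edge-coloring of $K_n$, and let $G_j$ be the spanning subgraph of color $j$. By the equivalence, $H_j$ is $\Gamma_j$-embeddable in $G_j$ if and only if some $F\in\mathcal{C}_j$ has an increasing injective homomorphism into $G_j$. The same equivalence applied to $\Lambda_j$ shows that this condition also characterizes $\Lambda_j$-embeddability. Hence, for every $n$ and every coloring, the two Ramsey conditions hold or fail together. The two minima defining the Ramsey numbers are therefore equal.

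There is no real obstacle. The only point needing care is the reading of the hypothesis: it must mean equality of the two equivalence classes of $H_j$ as sets of graphs. It must also be checked that the $F$ produced from an embedding really is $\Gamma$-isomorphic to $H$, which uses closure of $\Gamma$ under inverses.
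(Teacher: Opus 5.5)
Your proposal is correct and takes the same approach as the paper. The paper derives the proposition directly from the equivalence between $\Gamma$-embeddability of $H$ in $G$ and the existence of some $F \cong_\Gamma H$ with an increasing injective homomorphism into $G$, so embeddability of $H_j$ depends only on its equivalence class; your verification of that equivalence, including the use of closure of $\Gamma$ under inverses, fills in the step the paper calls straightforward.
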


A graph $H$ is said to have \emph{reflection symmetry} if the mapping
$\varphi \colon V(H) \to V(H)$ defined by $\varphi(x) = |H| - 1 - x$ is an automorphism of $H$. The following corollary of Proposition \ref{equiv_class} is immediate for graphs with reflection symmetry.

\begin{corollary}\label{ref_sym_cor}
    For any graphs $H_1, H_2, \ldots, H_k$ with reflection symmetry, we have $R_\mathrm{ref}(H_1, H_2, \ldots, H_k) = R_\mathrm{ord}(H_1, H_2, \ldots, H_k)$ and $R_\mathrm{dih}(H_1, H_2, \ldots, H_k) = R_\mathrm{cyc}(H_1, H_2, \ldots, H_k)$.
\end{corollary}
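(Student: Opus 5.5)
The plan is to apply Proposition~\ref{equiv_class} twice: once with $\Gamma_j$ the trivial group and $\Lambda_j$ the reflection group, and once with $\Gamma_j$ the cyclic group and $\Lambda_j$ the dihedral group. For each $j \in \{1, 2, \ldots, k\}$, write $\rho_j$ for the reflection $x \mapsto |H_j| - 1 - x$ on $V(H_j)$ and $\sigma_j$ for the cyclic shift. The reflection group is $\{\mathrm{id}, \rho_j\}$, and the dihedral group is $\langle \sigma_j \rangle \cup \langle \sigma_j \rangle \rho_j$, since $\rho_j \sigma_j \rho_j = \sigma_j^{-1}$. All that must be checked is that the $\cong_{\Lambda_j}$-class of $H_j$ coincides with its $\cong_{\Gamma_j}$-class in both cases.

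For the first equality, the $\cong_{\mathrm{trivial}}$-class of $H_j$ is $\{H_j\}$. If $F \cong_{\langle \rho_j \rangle} H_j$, then there is an isomorphism $\varphi \in \{\mathrm{id}, \rho_j\}$ from $F$ to $H_j$, so $F = \varphi^{-1}(H_j)$, meaning the graph with edges $\{\varphi^{-1}(u)\varphi^{-1}(v) : uv \in E(H_j)\}$. Because $\rho_j$ is an automorphism of $H_j$ by the reflection symmetry hypothesis, $\rho_j(H_j) = H_j$, and so $F = H_j$. Hence the two classes coincide, and Proposition~\ref{equiv_class} yields $R_\mathrm{ref}(H_1, \ldots, H_k) = R_\mathrm{ord}(H_1, \ldots, H_k)$.

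For the second equality, suppose $F \cong_{D} H_j$, where $D$ denotes the dihedral group. Then $F = \pi(H_j)$ for some $\pi \in D$, and $\pi$ is either $\sigma_j^{i}$ or $\sigma_j^{i}\rho_j$. In the latter case, $\pi(H_j) = \sigma_j^{i}(\rho_j(H_j)) = \sigma_j^{i}(H_j)$, again because $\rho_j \in \mathrm{Aut}(H_j)$. So in either case $F$ is a cyclic shift of $H_j$, i.e.\ $F \cong_{\langle \sigma_j \rangle} H_j$. The reverse inclusion is trivial because $\langle \sigma_j \rangle \le D$. Proposition~\ref{equiv_class} then gives $R_\mathrm{dih}(H_1, \ldots, H_k) = R_\mathrm{cyc}(H_1, \ldots, H_k)$.

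There is no real obstacle here. The only point requiring care is bookkeeping the direction of the isomorphism, that is, whether $F = \pi(H_j)$ or $F = \pi^{-1}(H_j)$. This direction does not matter, because both $\{\mathrm{id}, \rho_j\}$ and the dihedral group are closed under inverses, and $\rho_j^{-1} = \rho_j$ is itself an automorphism of $H_j$.
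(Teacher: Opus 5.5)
Your proposal is correct and follows the paper's route: the paper presents this corollary as an immediate consequence of Proposition~\ref{equiv_class}, and you make explicit the verification that, since the reflection $\rho_j$ is an automorphism of each $H_j$, the class of $H_j$ under the trivial group coincides with its class under the reflection group, and its class under the cyclic group coincides with its class under the dihedral group. Your use of the coset decomposition $\langle \sigma_j \rangle \cup \langle \sigma_j \rangle \rho_j$ is sound, and so is your remark that the direction of the isomorphism does not matter.
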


\noindent
The following corollary is another consequence of Proposition \ref{equiv_class}.

\begin{corollary}\label{ref_sym_ssc_cor}
    Let $H_1, H_2, \ldots, H_k$ be graphs such that each graph is a start-central star or has reflection symmetry. Then $R_\mathrm{dih}(H_1, H_2, \ldots, H_k) = R_\mathrm{cyc}(H_1, H_2, \ldots, H_k)$.
\end{corollary}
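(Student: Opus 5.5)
The plan is to apply Proposition~\ref{equiv_class} with $\Gamma_j$ the cyclic group and $\Lambda_j$ the dihedral group on $V(H_j)$, exactly as Corollary~\ref{ref_sym_cor} does. So it suffices to show that, for each $j$, the $\cong_{\Gamma_j}$-class of $H_j$ equals its $\cong_{\Lambda_j}$-class.

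Fix $H = H_j$ with $m = |H|$. Let $\sigma(x) = x + 1 \bmod m$ be the cyclic shift and $\rho(x) = m - 1 - x$ the reflection. For a permutation $\varphi$, let $\varphi(H)$ denote the graph on $\{0,\ldots,m-1\}$ with edges $\{\varphi(u)\varphi(v)\}$ for $uv \in E(H)$.

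First, the $\cong_\Gamma$-class of $H$ is exactly the orbit $\{\varphi(H) : \varphi \in \Gamma\}$. Indeed, $\varphi$ is an isomorphism from $H$ to $\varphi(H)$, and conversely any isomorphism in $\Gamma$ between $H$ and $F$ exhibits $F$ in the orbit; since $\Gamma$ is a group, the direction of the isomorphism does not matter. The dihedral group is $\Lambda = \langle\sigma\rangle \cup \langle\sigma\rangle\rho$. Hence the $\Lambda$-class of $H$ is the union of the cyclic orbit of $H$ and the cyclic orbit of $\rho(H)$. It therefore suffices to show that $\rho(H)$ lies in the cyclic orbit of $H$.

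There are two cases.
\begin{itemize}
\item If $H$ has reflection symmetry, then $\rho(H) = H$, and we are done.
\item If $H$ is a start-central star, i.e.\ $m - 1$ edges joining vertex $0$ to all other vertices, then $\rho(H)$ is the star centered at $m-1$ joined to all other vertices. This equals $\sigma^{-1}(H)$, because a spanning star is determined by its center and $\sigma^{-1}(0) = m - 1$. So $\rho(H)$ again lies in the cyclic orbit of $H$.
\end{itemize}

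Combining the two cases, the hypothesis of Proposition~\ref{equiv_class} holds for every $j$. That proposition then yields $R_\mathrm{dih}(H_1,\ldots,H_k) = R_\mathrm{cyc}(H_1,\ldots,H_k)$. The only potential subtlety is the convention in the paper's formal definition of start-central stars, which I take to be the spanning star centered at $0$. If that definition instead allowed a non-spanning star, the same argument would still need $\rho(H)$ to be a cyclic shift, and this would have to be checked against the precise definition in Section~\ref{sc:prel}.
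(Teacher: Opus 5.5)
Your proof is correct and follows the paper's intended route: it derives the corollary from Proposition~\ref{equiv_class} by checking, for each argument, that the cyclic and dihedral classes coincide, which reduces to $\rho(H_j)$ being a cyclic shift of $H_j$ (trivial under reflection symmetry, and $\rho(S^\mathrm{sc}_m) = \sigma^{-1}(S^\mathrm{sc}_m)$ for start-central stars). Your closing caveat is unnecessary, since the paper defines $S^\mathrm{sc}_n$ as the spanning star of order $n$ centered at vertex $0$.
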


Now, we formally define each of the graph classes relevant to our work. For any $n \in \mathbb{N}$, the \emph{monotone path} $P_n^\mathrm{mon}$ is the graph of order $n$ in which any two vertices are adjacent if and only if they are consecutive integers, while the \emph{alternating path} $P_n^\mathrm{alt}$ is the path graph of order~$n$ with the underlying path $(0, n - 1, 1, n - 2, 2, n - 3, \linebreak \ldots, \lfloor \frac{n}{2} \rfloor)$; see Figure \ref{alt_fig}. For any $n \ge 3$, the \emph{monotone cycle} $C_n^\mathrm{mon}$ is the graph obtained from $P_n^\mathrm{mon}$ by adding the edge $\{ 0, n - 1 \}$. Also, for convenience, we let $C_2^\mathrm{mon} \coloneqq K_2$. Furthermore, for any $n \in \mathbb{N}$, the \emph{start-central star} $S_n^\mathrm{sc}$ is the star graph of order $n$ in which vertex $0$ is adjacent to all the other vertices. Finally, for any even $n \ge 2$, the \emph{nested matching} $M_n^\mathrm{nest}$ is the $1$-regular graph of order $n$ in which each vertex $v$ is adjacent only to $n - 1 - v$; see Figure \ref{nm_fig}.

The choice of graph classes of interest is motivated by the computational results of \cite{BaDamSteSto2026}. The same graph classes are considered, with the exception of reverse alternating paths, which arise by reflecting alternating paths and are therefore equivalent to them in the context of reflective and dihedral Ramsey numbers. Furthermore, by Corollaries~\ref{ref_sym_cor} and \ref{ref_sym_ssc_cor}, it suffices to consider reflective Ramsey numbers in which at least one argument lacks reflection symmetry, and dihedral Ramsey numbers in which at least one argument is an alternating path. Otherwise, the problem reduces to computing an ordered or a cyclic Ramsey number, respectively, which was already investigated in \cite{BaDamSteSto2026}. Thus, we are interested in computing the reflective Ramsey numbers of alternating paths and start-central stars versus all the considered graph classes, together with the dihedral Ramsey numbers of alternating paths versus all the considered graph classes.

\begin{figure}[t]
\centering
\subcaptionbox{The alternating path $P_5^\mathrm{alt}$.}[0.47\textwidth]
{
    \centering
    \begin{tikzpicture}
        \tikzstyle{vertex}=[draw,circle,font=\scriptsize,minimum size=4pt,inner sep=1pt,fill=black]
        \tikzstyle{edge}=[draw,thick]

        \foreach \i in {0,1,2,3,4} {
            \node[vertex] (v\i) at ({1.25*\i}, 0) {};
            \node[below=4pt] at (v\i) {$\i$};
        }

        \path[edge, bend left=60] (v0) to (v4);
        \path[edge, bend right=60] (v4) to (v1);
        \path[edge, bend left=60] (v1) to (v3);
        \path[edge, bend right=60] (v3) to (v2);
    \end{tikzpicture}
}
\qquad
\subcaptionbox{The alternating path $P_6^\mathrm{alt}$.}[0.47\textwidth]
{
    \centering
    \begin{tikzpicture}
        \tikzstyle{vertex}=[draw,circle,font=\scriptsize,minimum size=4pt,inner sep=1pt,fill=black]
        \tikzstyle{edge}=[draw,thick]

        \foreach \i in {0,1,2,3,4,5} {
            \node[vertex] (v\i) at ({1.25*\i}, 0) {};
            \node[below=4pt] at (v\i) {$\i$};
        }

        \path[edge, bend left=60] (v0) to (v5);
        \path[edge, bend right=60] (v5) to (v1);
        \path[edge, bend left=60] (v1) to (v4);
        \path[edge, bend right=60] (v4) to (v2);
        \path[edge, bend left=60] (v2) to (v3);
    \end{tikzpicture}
}
\caption{The alternating paths of orders five and six. Source: \cite[Figure~1]{BaDamSteSto2026}.}
\label{alt_fig}
\end{figure}

\begin{figure}[t]
\centering
\subcaptionbox{The nested matching $M_4^\mathrm{nest}$.}[0.32\textwidth]
{
    \centering
    \begin{tikzpicture}
        \tikzstyle{vertex}=[draw,circle,font=\scriptsize,minimum size=4pt,inner sep=1pt,fill=black]
        \tikzstyle{edge}=[draw,thick]

        \foreach \i in {0,1,2,3} {
            \node[vertex] (v\i) at ({1.25*\i}, 0) {};
            \node[below=4pt] at (v\i) {$\i$};
        }

        \path[edge, bend left=60] (v0) to (v3);
        \path[edge, bend left=60] (v1) to (v2);
    \end{tikzpicture}
}
\qquad
\subcaptionbox{The nested matching $M_8^\mathrm{nest}$.}[0.62\textwidth]
{
    \centering
    \begin{tikzpicture}
        \tikzstyle{vertex}=[draw,circle,font=\scriptsize,minimum size=4pt,inner sep=1pt,fill=black]
        \tikzstyle{edge}=[draw,thick]

        \foreach \i in {0,1,2,3,4,5,6,7} {
            \node[vertex] (v\i) at ({1.25*\i}, 0) {};
            \node[below=4pt] at (v\i) {$\i$};
        }

        \path[edge, bend left=60] (v0) to (v7);
        \path[edge, bend left=60] (v1) to (v6);
        \path[edge, bend left=60] (v2) to (v5);
        \path[edge, bend left=60] (v3) to (v4);
    \end{tikzpicture}
}
\caption{The nested matchings of orders four and eight. Source: \cite[Figure~3]{BaDamSteSto2026}.}
\label{nm_fig}
\end{figure}

We conclude this section with the following lemma.

\begin{lemma}\label{cool_lemma}
    Let $H_1, H_2, \ldots, H_k$ and $F$ be graphs such that there exists an increasing injective homomorphism from $F$ to $H_1$. Then $R_\mathrm{type}(F, H_2, \ldots, H_k) \le R_\mathrm{type}(H_1, H_2, \ldots, H_k)$ for each $\mathrm{type} \in \{ \mathrm{ord}, \mathrm{ref}, \mathrm{cyc}, \mathrm{dih} \}$.
\end{lemma}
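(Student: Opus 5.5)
The plan is to argue directly from the definition of permutational Ramsey numbers. Let $n = R_\mathrm{type}(H_1, H_2, \ldots, H_k)$ and fix a $k$-edge-coloring of $K_n$ with color classes $G_1, \ldots, G_k$. By definition some $H_j$ is $\Gamma_j$-embeddable in $G_j$, where $\Gamma_j$ is the group of the given type on $V(H_j)$. If $j \ge 2$ there is nothing to prove. So the whole lemma reduces to the following claim. \emph{If $H_1$ is $\Gamma$-embeddable in $G$ (with $\Gamma$ the group of the given type on $V(H_1)$), then $F$ is $\Gamma'$-embeddable in $G$, where $\Gamma'$ is the group of the same type on $V(F)$.}

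Write $m = |H_1|$ and $f = |F|$. Let $\theta \colon V(F) \to V(H_1)$ be the given increasing injective homomorphism, and let $\psi \circ \varphi$ be the embedding of $H_1$, with $\varphi \in \Gamma$ and $\psi$ increasing injective. Then $\psi \circ \varphi \circ \theta$ is a homomorphism from $F$ to $G$. The key step is a commutation property: for every $\varphi \in \Gamma$, we can write $\varphi \circ \theta = \theta' \circ \varphi'$ with $\varphi' \in \Gamma'$ and $\theta'$ an increasing injection. Then $(\psi \circ \theta') \circ \varphi'$ is the desired $\Gamma'$-embedding, since a composition of increasing injections is an increasing injection. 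It suffices to verify the commutation property on generators, since the set of $\varphi$ admitting such a factorization is closed under composition. Indeed, if $\varphi_1 \circ \theta = \theta_1 \circ \varphi_1'$, then $\varphi_2 \circ \varphi_1 \circ \theta = (\varphi_2 \circ \theta_1) \circ \varphi_1'$, and $\theta_1$ is again an increasing injection into $V(H_1)$, so the property for $\varphi_2$ can be applied to $\theta_1$ in place of $\theta$.

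\textbf{Type ord.} Here $\varphi$ is the identity and the property is trivial.

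\textbf{Reflection.} Let $r_m(x) = m-1-x$. Then $r_m \circ \theta = \theta' \circ r_f$, where $\theta'(y) = m-1-\theta(f-1-y)$. This $\theta'$ is increasing and injective because it is a composition of two order-reversing injections.

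\textbf{Cyclic shift.} Let $s_m(x) = x+1 \bmod m$. If $\theta(f-1) < m-1$, then $s_m \circ \theta$ is already increasing, and $\varphi'$ is the identity. Otherwise $\theta(f-1) = m-1$, and we take $\varphi' = s_f$ and $\theta'(0) = 0$, $\theta'(y) = \theta(y-1)+1$ for $y \ge 1$. This $\theta'$ is increasing and injective since $\theta(y-1)+1 \ge 1$, and it satisfies $s_m \circ \theta = \theta' \circ s_f$.

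\textbf{Dihedral.} The dihedral case follows by combining the reflection and cyclic-shift cases via the closure under composition noted above.

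The main obstacle, mild as it is, is the cyclic case. There the wrap-around of the last vertex must be handled so that the resulting map is still increasing, which is why the case split on whether $m-1$ lies in the image of $\theta$ is needed. Once the claim is established, every coloring of $K_n$ yields an embedding of $F$ in color $1$ or of $H_j$ in color $j$ for some $j \ge 2$. Hence $R_\mathrm{type}(F, H_2, \ldots, H_k) \le n$.
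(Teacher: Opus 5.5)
Your proposal is correct and follows essentially the same route as the paper. Both arguments reduce the lemma to showing that a $\Gamma$-embedding of $H_1$ yields a $\Gamma'$-embedding of $F$, and both commute the group element past the increasing injection one generator at a time; the paper's induction on $p$ for the cyclic shift and its normal form $t \circ g^p$ in the dihedral case do the same job as your closure-under-composition argument. One point is implicit and worth stating: closure under composition (rather than under inverses) suffices because every element of a finite group is a positive word in its generators.
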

\begin{proof}
    Let $n \in \mathbb{N}$ and consider a $k$-edge-coloring of $K_n$. Let $G$ denote the spanning subgraph of $K_n$ comprising the edges of color $1$. Furthermore, let $\Gamma$ and $\Lambda$ denote the permutation groups associated with $H_1$ and $F$, respectively, in the permutational Ramsey numbers $R_\mathrm{type}(H_1, H_2, \ldots, H_k)$ and $R_\mathrm{type}(F, H_2, \ldots, H_k)$. To complete the proof, it suffices to show that if $H_1$ is $\Gamma$-embeddable in $G$, then $F$ is $\Lambda$-embeddable in $G$.
    
    Let $\chi$ be an increasing injective homomorphism from $F$ to $H_1$, and suppose that there exists a homomorphism from $H_1$ to $G$ of the form $\psi_1 \circ \varphi_1$, where $\varphi_1 \in \Gamma$ and $\psi_1 \colon V(H_1) \to V(G)$ is an increasing injection. Then $f \coloneqq \psi_1 \circ \varphi_1 \circ \chi$ is an injective homomorphism from $F$ to $G$. We carry out the proof by showing that $f$ can be written as $f = \psi_2 \circ \varphi_2$ for some $\varphi_2 \in \Lambda$ and increasing injection $\psi_2 \colon V(F) \to V(G)$.

    If $\mathrm{type} = \mathrm{ord}$, then $\varphi_1$ is the identity permutation, so $f$ is increasing, which directly gives the desired result. On the other hand, if $\mathrm{type} = \mathrm{ref}$, then $\varphi_1$ is either the identity permutation, in which case we are done, or $\varphi_1$ is the reflection with respect to the natural vertex order. In the latter case, we have $\varphi_1(x) = |H_1| - 1 - x$ for each $x \in V(H_1)$, which implies that $f$ is decreasing. By defining $\varphi_2 \in \Lambda$ via $\varphi_2(x) = |F| - 1 - x$ for each $x \in V(F)$, it follows that $\psi_2 \coloneqq f \circ \varphi_2$ is an increasing injection from $V(F)$ to $V(G)$. Therefore, $f = \psi_2 \circ \varphi_2^{-1} = \psi_2 \circ \varphi_2$ is the desired representation of $f$.
    
    Next, suppose that $\mathrm{type} = \mathrm{cyc}$. Here, we have $\varphi_1 = g^p$ for $g = \begin{pmatrix}\begin{smallmatrix}
    0 & 1 & 2 & \cdots & |H_1| - 2 & |H_1| - 1\\
    1 & 2 & 3 & \cdots & |H_1| - 1 & 0
    \end{smallmatrix}\end{pmatrix}$ and some $p \in \mathbb{N}$. First, we show that for any increasing injection $h_1 \colon V(F) \to V(H_1)$, the map $g \circ h_1$ can be written as $g \circ h_1 = h_2 \circ r$ for some $r \in \Lambda$ and increasing injection $h_2 \colon V(F) \to V(H_1)$. If $h_1(|F| - 1) \neq |H_1| - 1$, then $g \circ h_1$ is increasing, so we are done. Otherwise, $(g \circ h_1)(|F| - 1) = 0$ and $1 \le (g \circ h_1)(0) \le (g \circ h_1)(1) \le \cdots \le (g \circ h_1)(|F| - 2)$. By letting $r \coloneqq \begin{pmatrix}\begin{smallmatrix}
    0 & 1 & 2 & \cdots & |F| - 2 & |F| - 1\\
    1 & 2 & 3 & \cdots & |F| - 1 & 0
    \end{smallmatrix}\end{pmatrix}$, it follows that $h_2 \coloneqq (g \circ h_1) \circ r^{-1}$ is an increasing injection, so $g \circ h_1 = h_2 \circ r$ is the desired representation of $g \circ h_1$. Now, by a simple induction on $p$, we conclude that for any increasing injection $h_1 \colon V(F) \to V(H_1)$, the map $g^p \circ h_1$ can also be written as $g^p \circ h_1 = h_2 \circ r$ for some $r \in \Lambda$ and increasing injection $h_2 \colon V(F) \to V(H_1)$. By applying this observation to $g^p \circ \chi$, we obtain $f = \psi_1 \circ g^p \circ \chi = \psi_1 \circ h_2 \circ r$, so the original desired representation follows by letting $\psi_2 \coloneqq \psi_1 \circ h_2$ and $\varphi_2 \coloneqq r$.

    Finally, suppose that $\mathrm{type} = \mathrm{dih}$, and let $g$ be defined as above. In this situation, either $\varphi_1 = g^p$ for some $p \in \mathbb{N}$, or $\varphi_1 = t \circ g^p$ for $t = \begin{pmatrix}\begin{smallmatrix}
    0 & 1 & 2 & \cdots & |H_1| - 1\\
    |H_1|-1 & |H_1|-2 & |H_1|-3 & \cdots & 0
    \end{smallmatrix}\end{pmatrix}$ and some $p \in \mathbb{N}$. In the former case, the result is proved in the same manner as in the case $\mathrm{type} = \mathrm{cyc}$. In the latter case, we have $f = \psi_1 \circ t \circ g^p \circ \chi$, so by the results from the case $\mathrm{type} = \mathrm{cyc}$, we conclude that $f$ can be written as $f = \psi_1 \circ t \circ h_2 \circ r$, where $r = z^q$ for $z = \begin{pmatrix}\begin{smallmatrix}
    0 & 1 & 2 & \cdots & |F| - 2 & |F| - 1\\
    1 & 2 & 3 & \cdots & |F| - 1 & 0
    \end{smallmatrix}\end{pmatrix}$ and some $q \in \mathbb{N}$, and $h_2 \colon V(F) \to V(H_1)$ is an increasing injection. Now, define $w \in \Lambda$ via $w(x) = |F| - 1 - x$ for each $x \in V(F)$, and observe that $\psi_2 \coloneqq \psi_1 \circ t \circ h_2 \circ w$ is an increasing injection from $F$ to $G$. Therefore, $\psi_1 \circ t \circ h_2 = \psi_2 \circ w^{-1} = \psi_2 \circ w$, which yields $f = \psi_2 \circ w \circ r$. The desired representation now follows by letting $\varphi_2 \coloneqq w \circ r$.
\end{proof}

\noindent
Our software for computing small reflective and dihedral Ramsey numbers relies on the following corollaries of Lemma~\ref{cool_lemma}.

\begin{corollary}\label{cool_cor_1}
    Let $H$ be any graph, $n \in \mathbb{N}$ and $\mathrm{type} \in \{ \mathrm{ord}, \mathrm{ref}, \mathrm{cyc}, \mathrm{dih} \}$. Then:
    \begin{enumerate}[label=\textbf{(\arabic*)}]
        \item $R_\mathrm{type}(P^\mathrm{mon}_n, H) \le R_\mathrm{type}(P^\mathrm{mon}_{n + 1}, H)$;
        \item $R_\mathrm{type}(S^\mathrm{sc}_n, H) \le R_\mathrm{type}(S^\mathrm{sc}_{n + 1}, H)$; and
        \item $R_\mathrm{type}(K_n, H) \le R_\mathrm{type}(K_{n + 1}, H)$.
    \end{enumerate}
\end{corollary}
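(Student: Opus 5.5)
The plan is to apply Lemma~\ref{cool_lemma} with $k = 2$, $H_2 \coloneqq H$, $H_1$ the larger graph ($P^\mathrm{mon}_{n+1}$, $S^\mathrm{sc}_{n+1}$ or $K_{n+1}$) and $F$ the smaller one ($P^\mathrm{mon}_n$, $S^\mathrm{sc}_n$ or $K_n$). The lemma gives the stated inequality for every $\mathrm{type} \in \{\mathrm{ord}, \mathrm{ref}, \mathrm{cyc}, \mathrm{dih}\}$ at once. So all three items reduce to exhibiting an increasing injective homomorphism $\chi$ from the order-$n$ graph to the order-$(n+1)$ graph of the same family.

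In each case we take $\chi$ to be the inclusion $\chi(x) = x$ for $x \in \{0, 1, \ldots, n-1\}$, which is clearly an increasing injection into $\{0, 1, \ldots, n\}$. It remains to check that it preserves edges.

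\begin{enumerate}[label=\textbf{(\arabic*)}]
    \item In $P^\mathrm{mon}_n$, the edges are the pairs $\{x, x+1\}$ with $x + 1 \le n - 1$. These are also edges of $P^\mathrm{mon}_{n+1}$.
    \item In $S^\mathrm{sc}_n$, the edges are the pairs $\{0, x\}$ with $1 \le x \le n - 1$. These are also edges of $S^\mathrm{sc}_{n+1}$, since vertex $0$ is adjacent to all other vertices there.
    \item Every pair of distinct vertices of $\{0, \ldots, n-1\}$ is an edge of $K_{n+1}$.
\end{enumerate}

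There is no real obstacle here. The only point needing care is the degenerate small case $n = 1$, where the smaller graph has no edges; the inclusion is still trivially an increasing injective homomorphism, so the argument goes through unchanged. I do not use the monotone cycle here, since the inclusion $C^\mathrm{mon}_n \to C^\mathrm{mon}_{n+1}$ would fail to preserve the edge $\{0, n-1\}$; this explains why cycles are absent from the corollary.
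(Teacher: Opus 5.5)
Your proposal is correct and matches the paper's proof: both apply Lemma~\ref{cool_lemma} with $F$ the order-$n$ graph and $H_1$ the order-$(n+1)$ graph of the same family. Both rely on the existence of an increasing injective homomorphism between them. You make this explicit as the inclusion map and check edge preservation case by case, which the paper leaves as an observation.
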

\begin{proof}
    The result follows from Lemma~\ref{cool_lemma} by observing that there exists an increasing injective homomorphism from $P^\mathrm{mon}_n$, $S^\mathrm{sc}_n$ and $K_n$, to $P^\mathrm{mon}_{n + 1}$, $S^\mathrm{sc}_{n + 1}$ and $K_{n + 1}$, respectively.
\end{proof}

\begin{corollary}
    Let $H$ be any graph, $n \in \mathbb{N}$ be even and $\mathrm{type} \in \{ \mathrm{ord}, \mathrm{ref}, \mathrm{cyc}, \mathrm{dih} \}$. Then
    $R_\mathrm{type}(M^\mathrm{nest}_n, H) \le R_\mathrm{type}(M^\mathrm{nest}_{n + 2}, H)$.
\end{corollary}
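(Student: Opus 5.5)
We obtain this corollary directly from Lemma~\ref{cool_lemma}, exactly as Corollary~\ref{cool_cor_1} was obtained. Applying the lemma with $k = 2$, $F = M^\mathrm{nest}_n$, $H_1 = M^\mathrm{nest}_{n+2}$ and $H_2 = H$, it suffices to exhibit an increasing injective homomorphism $\chi$ from $M^\mathrm{nest}_n$ to $M^\mathrm{nest}_{n+2}$. Once this map is in hand, the lemma gives the inequality for every $\mathrm{type} \in \{ \mathrm{ord}, \mathrm{ref}, \mathrm{cyc}, \mathrm{dih} \}$ at once.

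The natural candidate is $\chi(x) = x + 1$ for each $x \in \{0, 1, \ldots, n-1\}$. This discards the outermost pair $\{0, n+1\}$ of $M^\mathrm{nest}_{n+2}$ and places $M^\mathrm{nest}_n$ on the inner vertices $1, 2, \ldots, n$.

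Clearly $\chi$ is increasing and injective, and it maps into $V(M^\mathrm{nest}_{n+2}) = \{0, \ldots, n+1\}$. The homomorphism check is the only computation. An edge of $M^\mathrm{nest}_n$ has the form $\{v, n-1-v\}$. Its image is $\{v+1, n-v\}$, and since
\[
    (n+2) - 1 - (v+1) = n - v,
\]
this image is precisely the edge of $M^\mathrm{nest}_{n+2}$ at vertex $v+1$.

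I expect no real obstacle here. The only point needing care is the indexing, namely checking that the shifted matching pairs agree with the defining rule $v \mapsto |H| - 1 - v$ in the larger graph, and this is the one-line identity above. The parity assumption on $n$ is only needed for $M^\mathrm{nest}_n$ to be defined, and then $n+2$ is even as well.
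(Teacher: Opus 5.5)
Your proposal is correct and matches the paper's proof: the paper also obtains this corollary from Lemma~\ref{cool_lemma} by noting that there is an increasing injective homomorphism from $M^\mathrm{nest}_n$ to $M^\mathrm{nest}_{n+2}$. Your explicit shift $\chi(x) = x + 1$, together with the identity $(n+2) - 1 - (v+1) = n - v$, supplies exactly the verification the paper leaves implicit.
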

\begin{proof}
    The result follows from Lemma~\ref{cool_lemma} by observing that there exists an increasing injective homomorphism from $M^\mathrm{nest}_n$ to $M^\mathrm{nest}_{n + 2}$.
\end{proof}

\begin{corollary}\label{cool_cor_3}
    Let $H$ be any graph, $n \in \mathbb{N}$ and $\mathrm{type} \in \{ \mathrm{ref}, \mathrm{dih} \}$. Then $R_\mathrm{type}(P^\mathrm{alt}_n, H) \le R_\mathrm{type}(P^\mathrm{alt}_{n + 1}, H)$.
\end{corollary}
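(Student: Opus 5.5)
The plan is to combine Lemma~\ref{cool_lemma} with Proposition~\ref{graph_invar}. A direct increasing injective homomorphism from $P^\mathrm{alt}_n$ into $P^\mathrm{alt}_{n+1}$ need not exist. However, one exists from the \emph{reflected} alternating path, and reflection is harmless for $\mathrm{type} \in \{\mathrm{ref}, \mathrm{dih}\}$.

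\textbf{Step 1: define the reflected path.} Let $F$ be the image of $P^\mathrm{alt}_n$ under $x \mapsto n-1-x$. Its underlying path is $(n-1, 0, n-2, 1, n-3, 2, \ldots)$. Its edges are $\{i, n-1-i\}$ and $\{i, n-2-i\}$ for the appropriate ranges of $i$. Since the reflection lies in both the reflection group and the dihedral group on $\{0,\ldots,n-1\}$, we have $F \cong_\Gamma P^\mathrm{alt}_n$ in both cases. Proposition~\ref{graph_invar} then gives $R_\mathrm{type}(F, H) = R_\mathrm{type}(P^\mathrm{alt}_n, H)$.

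\textbf{Step 2: embed $F$ into $P^\mathrm{alt}_{n+1}$.} The candidate map is the shift $x \mapsto x+1$, which is an increasing injection. Conceptually, deleting vertex $0$ from the path $(0, n, 1, n-1, 2, \ldots)$ of $P^\mathrm{alt}_{n+1}$ leaves the path $(n, 1, n-1, 2, \ldots)$ on $\{1,\ldots,n\}$. Relabelling via $x \mapsto x-1$ turns this into $(n-1, 0, n-2, 1, \ldots)$, which is exactly $F$.

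To verify this on edges, note that the edges of $P^\mathrm{alt}_{n+1}$ are $\{i, n-i\}$ and $\{n-i, i+1\}$. The shift sends $\{i, n-1-i\}$ to $\{i+1, n-i\}$, which is an edge of the second kind. It sends $\{i, n-2-i\}$ to $\{i+1, n-(i+1)\}$, which is an edge of the first kind. One must also check that the index ranges match at the middle of the path, where the parity of $n$ matters. This bookkeeping is the only place I expect any care to be needed. The path-deletion description makes it transparent, since removing an endpoint of a path leaves a path whose vertex order is preserved by the relabelling.

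\textbf{Step 3: conclude.} Applying Lemma~\ref{cool_lemma} with $H_1 = P^\mathrm{alt}_{n+1}$ and $F$ as above gives
\[
R_\mathrm{type}(F, H) \le R_\mathrm{type}(P^\mathrm{alt}_{n+1}, H).
\]
Combining this with Step 1 yields $R_\mathrm{type}(P^\mathrm{alt}_n, H) \le R_\mathrm{type}(P^\mathrm{alt}_{n+1}, H)$ for $\mathrm{type} \in \{\mathrm{ref}, \mathrm{dih}\}$.
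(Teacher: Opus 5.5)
Your proof is correct and is essentially the paper's argument. The paper reflects the larger path instead: it embeds $P^\mathrm{alt}_n$ into the reverse alternating path $P^\mathrm{ralt}_{n+1}$ by the identity inclusion, then applies Lemma~\ref{cool_lemma} and Proposition~\ref{graph_invar}, whereas you reflect $P^\mathrm{alt}_n$ and embed via the shift $x \mapsto x+1$, which is the mirror image of the same step (and your endpoint-deletion description correctly takes care of the parity bookkeeping).
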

\begin{proof}
    Let $P^\mathrm{ralt}_{n + 1}$ denote the reverse alternating path of order $n + 1$, which arises by reflecting $P^\mathrm{alt}_{n + 1}$ with respect to the natural vertex order. Then there exists an increasing injective homomorphism from $P^\mathrm{alt}_n$ to $P^\mathrm{ralt}_{n + 1}$, so Lemma \ref{cool_lemma} gives $R_\mathrm{type}(P^\mathrm{alt}_n, H) \le R_\mathrm{type}(P^\mathrm{ralt}_{n + 1}, H)$. Since $P^\mathrm{ralt}_{n + 1}$ and $P^\mathrm{alt}_{n + 1}$ are equivalent in the context of reflective and dihedral Ramsey numbers, Proposition \ref{graph_invar} gives $R_\mathrm{type}(P^\mathrm{ralt}_{n + 1}, H) = R_\mathrm{type}(P^\mathrm{alt}_{n + 1}, H)$.
\end{proof}

\section{Methodology}\label{sc:meth}

In the present section, we describe how the computation of permutational Ramsey numbers can be carried out by solving SAT problems. Our approach is a direct generalization of the SAT-based approach originally proposed by Poljak for ordered Ramsey numbers \cite{Poljak2020} and recently extended to cyclic Ramsey numbers \cite{BaDamSteSto2026}.

Let $H_1$ and $H_2$ be two graphs, and let $\Gamma_1$ and $\Gamma_2$ be two permutation groups on their respective vertex sets. Now, consider a $2$-edge-coloring of $K_n$ for some $n \in \mathbb{N}$, and let $G_j$ be the spanning subgraph of $K_n$ comprising the edges of color $j$, for $j \in \{ 1, 2 \}$. For each $\{ u, v \} \in \binom{V(K_n)}{2}$, let $x_{\{ u, v \}}$ be a Boolean variable indicating whether the edge $\{ u, v \}$ is colored with color~$2$. Also, let $\mathrm{Inc}(V(H_1), V(K_n))$ and $\mathrm{Inc}(V(H_2), V(K_n))$ denote the sets of all increasing injections from $V(H_1)$ to $V(K_n)$ and from $V(H_2)$ to $V(K_n)$, respectively.

In the context of computing the permutational Ramsey number $R(H_1^{\Gamma_1}, H_2^{\Gamma_2})$, the search for a $2$-edge-coloring of $K_n$ containing no forbidden embeddings can be encoded as a SAT problem by observing that $H_1$ is not $\Gamma_1$-embeddable in $G_1$ if and only if
\begin{equation}\label{cl1}
    \bigwedge_{\psi \in \mathrm{Inc}(V(H_1), V(K_n))} \bigwedge_{\varphi \in \Gamma_1} \bigvee_{ \{ u, v \} \in E(H_1)} x_{\{ (\psi \circ \varphi)(u), (\psi \circ \varphi)(v) \}} ,
\end{equation}
while $H_2$ is not $\Gamma_2$-embeddable in $G_2$ if and only if
\begin{equation}\label{cl2}
    \bigwedge_{\psi \in \mathrm{Inc}(V(H_2), V(K_n))} \bigwedge_{\varphi \in \Gamma_2} \bigvee_{ \{ u, v \} \in E(H_2)} \neg x_{\{ (\psi \circ \varphi)(u), (\psi \circ \varphi)(v) \}} .
\end{equation}
Therefore, by combining \eqref{cl1} and \eqref{cl2}, we obtain a SAT problem with $\binom{n}{2}$ variables and $\binom{n}{|H_1|} |\Gamma_1| + \binom{n}{|H_2|} |\Gamma_2|$ clauses. If the SAT problem is satisfiable, then there exists a $2$-edge-coloring of $K_n$ containing no forbidden embeddings, hence $R(H_1^{\Gamma_1}, H_2^{\Gamma_2}) \ge n + 1$. Otherwise, such a $2$-edge-coloring of $K_n$ does not exist, implying that $R(H_1^{\Gamma_1}, H_2^{\Gamma_2}) \le n$. Therefore, solving the formulated SAT problem gives either a lower bound or an upper bound on $R(H_1^{\Gamma_1}, H_2^{\Gamma_2})$. The permutational Ramsey number can then be exactly computed by solving SAT instances for sufficiently many values of $n$.

Our computation of small reflective and dihedral Ramsey numbers of interest is automated through a pipeline of four \texttt{Python} modules:
\begin{enumerate}[label=\textbf{(\arabic*)}]
    \item \texttt{sat\_problem\_generator.py};
    \item \texttt{instance\_solver.py};
    \item \texttt{number\_finder.py}; and
    \item \texttt{grid\_explorer.py}.
\end{enumerate}
The first module, \texttt{sat\_problem\_generator.py}, is essentially the \texttt{src/sat\_solving/cnf\_generator.py} script from \cite{BaDamSteSto2026Repo}, with small modifications made to provide a suitable API for the other modules. The other three modules are implemented from scratch and their core logic is based on the \texttt{bash} scripts in the \texttt{src/sat\_solving} folder from \cite{BaDamSteSto2026Repo}. Our full source code is available at \cite{GitHub}, together with supplementary data, including the computed Ramsey numbers, the Kissat output files and all the $2$-edge-colorings yielding the lower bounds.

The \texttt{sat\_problem\_generator.py} script accepts two graphs $H_1$ and $H_2$ from the graph classes of interest, together with $n \in \mathbb{N}$ and a string indicating which type of Ramsey number is being considered. It then generates the corresponding SAT problem encoding the search for a $2$-edge-coloring of $K_n$ containing no forbidden embeddings. The generated SAT instance is printed out in the standard DIMACS CNF format, with the Boolean variables $x_{\{ u, v \}}$ arranged in ascending order first by $\min \{ u, v \}$, and then by $\max \{ u, v \}$, i.e., as
\[
    x_{\{0, 1\}}, x_{\{0, 2\}}, \ldots, x_{\{0, n - 1\}}, x_{\{1, 2\}}, x_{\{1, 3\}}, \ldots, x_{\{1, n - 1\}}, x_{\{2, 3\}}, x_{\{2, 4\}}, \ldots, x_{\{2, n - 1\}}, \ldots, x_{\{n - 2, n - 1\}}.
\]
The script is executed with six required arguments using the following syntax:
\begin{lstlisting}[language = bash, frame = trBL, escapeinside={(*@}{@*)}, aboveskip=10pt, belowskip=10pt, numbers=none, rulecolor=\color{black}]
python sat_problem_generator.py <class_a> <a> <class_b> <b> <problem_type> <n>
\end{lstlisting}

Here, \texttt{class\_a} and \texttt{class\_b} are strings from the set $\{ \mathrm{pmon}, \mathrm{palt}, \mathrm{cmon}, \mathrm{ssc}, \mathrm{k}, \mathrm{mnest} \}$ that specify the graph classes of $H_1$ and $H_2$, respectively, while \texttt{a} and \texttt{b} determine $|H_1|$ and $|H_2|$. Furthermore, \texttt{problem\_type} is a string from the set $\{ \mathrm{ord}, \mathrm{ref}, \mathrm{cyc}, \mathrm{dih} \}$ indicating which of the four supported types of Ramsey numbers (ordered, reflective, cyclic and dihedral) is being considered, while \texttt{n} represents $n$.

\begin{example}
Consider the following command:
\begin{lstlisting}[language = bash, frame = trBL, escapeinside={(*@}{@*)}, aboveskip=10pt, belowskip=10pt, numbers=none, rulecolor=\color{black}]
python sat_problem_generator.py palt 4 pmon 5 ref 12
\end{lstlisting}
This command prints out the DIMACS CNF representation of the SAT instance that encodes the search for a $2$-edge-coloring of $K_{12}$ containing no forbidden embeddings in the context of the reflective Ramsey number $R_\mathrm{ref}(P^\mathrm{alt}_{4}, P^\mathrm{mon}_5)$. \hfill$\Diamond$
\end{example}

The \texttt{instance\_solver.py} module solves a SAT instance encoding the search for a $2$-edge-coloring of $K_n$ containing no forbidden embeddings. It accepts the same arguments as the \texttt{sat\_problem\_generator.py} script, which is immediately invoked to generate an output DIMACS CNF file representing the generated SAT problem. Afterwards, the Kissat SAT solver is used on the output file in order to solve the generated SAT instance. Both of these steps have a time limit, and the execution is aborted in case of a timeout. Additionally, if the Kissat executable is not present, an auxiliary \texttt{bash} script from \cite{BaDamSteSto2026Repo} is used to obtain it automatically. To avoid trivial cases where the Ramsey number is identically one, both argument graphs are required to have at least two vertices.

The main function of the \texttt{instance\_solver.py} module returns an ordered pair \texttt{(status, graph)}, as follows.
\begin{enumerate}[label=\textbf{(\arabic*)}]
    \item If a timeout is reached, then both \texttt{status} and \texttt{graph} are set to \texttt{None}. In this case, it is not known whether the SAT instance has a solution.
    \item If the SAT instance is deemed unsatisfiable, then \texttt{status} is set to \texttt{False}, while \texttt{graph} is set to \texttt{None}.
    \item If the SAT instance is deemed satisfiable, then \texttt{status} is set to \texttt{True}, while \texttt{graph} is set to the \texttt{graph6} \cite{McKayGraph6, McKayPip2014} string representation of the solution found to the SAT instance. Here, the presence of an edge indicates that its color in $K_n$ is $2$, i.e., that the corresponding Boolean variable is \texttt{True}.
\end{enumerate}
In addition, the \texttt{instance\_solver.py} module saves the Kissat outputs to separate files in the \texttt{kissat\_output} folder and, in the case of satisfiable instances, saves the \texttt{graph6} string representations of the corresponding solutions to separate files in the \texttt{parsed\_graphs} folder.

\begin{example}
Consider the following command:
\begin{lstlisting}[language = bash, frame = trBL, escapeinside={(*@}{@*)}, aboveskip=10pt, belowskip=10pt, numbers=none, rulecolor=\color{black}]
python instance_solver.py palt 4 pmon 5 ref 12
\end{lstlisting}
This command gives the following output:
\begin{lstlisting}[language = {}, frame = trBL, escapeinside={(*@}{@*)}, aboveskip=10pt, belowskip=10pt, numbers=none, rulecolor=\color{black}]
(True, 'KCrffr{~f}^{')
\end{lstlisting}
Therefore, the corresponding SAT instance is satisfiable, with the solution given as a \texttt{graph6} string. Hence, $R_\mathrm{ref}(P^\mathrm{alt}_4, P^\mathrm{mon}_5) \ge 13$. Now, consider the following command:
\begin{lstlisting}[language = bash, frame = trBL, escapeinside={(*@}{@*)}, aboveskip=10pt, belowskip=10pt, numbers=none, rulecolor=\color{black}]
python instance_solver.py palt 4 pmon 5 ref 13
\end{lstlisting}
Its execution gives the following output:
\begin{lstlisting}[language = bash, frame = trBL, escapeinside={(*@}{@*)}, aboveskip=10pt, belowskip=10pt, numbers=none, rulecolor=\color{black}]
(False, None)
\end{lstlisting}
This means that the corresponding SAT instance is unsatisfiable, implying $R_\mathrm{ref}(P^\mathrm{alt}_4, P^\mathrm{mon}_5) \le 13$. With these two executions in mind, we conclude that $R_\mathrm{ref}(P^\mathrm{alt}_4, P^\mathrm{mon}_5) = 13$. \hfill$\Diamond$
\end{example}

Similar to the previous two \texttt{Python} files, the \texttt{number\_finder.py} module accepts two graphs $H_1$ and $H_2$ from the graph classes of interest, satisfying $|H_1|, |H_2| \ge 2$, together with a string indicating which type of Ramsey number is being considered. It then computes the considered Ramsey number by solving SAT instances via the \texttt{instance\_solver.py} module for sufficiently many values of $n$. The \texttt{number\_finder.py} module can be executed using the following syntax:
\begin{lstlisting}[language = bash, frame = trBL, escapeinside={(*@}{@*)}, aboveskip=10pt, belowskip=10pt, numbers=none, rulecolor=\color{black}]
python number_finder.py <class_a> <a> <class_b> <b> <problem_type> [<starting_n>]
\end{lstlisting}

The first five arguments are required and have the same meaning as in the previous two modules, while the \texttt{starting\_n} argument is optional. If this argument is not provided, then the script attempts to generate and solve a SAT instance for each integer $n$ starting from the trivial lower bound $\max \{ |H_1|, |H_2| \}$, until the first unsatisfiable SAT instance is encountered or a timeout is reached. If \texttt{starting\_n} is provided and exceeds $\max \{ |H_1|, |H_2| \}$, then it is treated as a nontrivial lower bound on the considered Ramsey number, so the script attempts to generate and solve SAT instances for each integer $n$ starting from \texttt{starting\_n}. The user must guarantee that the provided 
\texttt{starting\_n} is a valid lower bound; otherwise, the execution will yield an incorrect result.

The main function of the \texttt{number\_finder.py} module returns an ordered pair \texttt{(exact, number)}, as follows.
\begin{enumerate}[label=\textbf{(\arabic*)}]
    \item If an unsatisfiable SAT instance is encountered for the first time, this means that the corresponding $n$ is equal to the considered Ramsey number. In this case, \texttt{exact} is set to \texttt{True}, while \texttt{number} is set to the computed value of the considered Ramsey number.
    \item If a timeout is reached, then the corresponding $n$ is a lower bound on the considered Ramsey number, and it is not known if equality holds. In this case, \texttt{exact} is set to \texttt{False}, while \texttt{number} is set to the obtained lower bound on the considered Ramsey number.
\end{enumerate}

\begin{example}
The reflective Ramsey number $R_\mathrm{ref}(P^\mathrm{alt}_4, P^\mathrm{mon}_5)$ can be computed by executing the following command:
\begin{lstlisting}[language = bash, frame = trBL, escapeinside={(*@}{@*)}, aboveskip=10pt, belowskip=10pt, numbers=none, rulecolor=\color{black}]
python number_finder.py palt 4 pmon 5 ref
\end{lstlisting}
Upon execution, the script prints out the following output:
\begin{lstlisting}[language = {}, frame = trBL, escapeinside={(*@}{@*)}, aboveskip=10pt, belowskip=10pt, numbers=none, rulecolor=\color{black}]
palt 4 vs. pmon 5 // ref
Inspecting for n = 5... Satisfiable!
Inspecting for n = 6... Satisfiable!
Inspecting for n = 7... Satisfiable!
Inspecting for n = 8... Satisfiable!
Inspecting for n = 9... Satisfiable!
Inspecting for n = 10... Satisfiable!
Inspecting for n = 11... Satisfiable!
Inspecting for n = 12... Satisfiable!
Inspecting for n = 13... Unsatisfiable!
(True, 13)
\end{lstlisting}
Therefore, $R_\mathrm{ref}(P^\mathrm{alt}_4, P^\mathrm{mon}_5) = 13$. \hfill$\Diamond$
\end{example}

Finally, the \texttt{grid\_explorer.py} module computes Ramsey numbers of a selected type with arguments from two provided graph classes. Although this \texttt{Python} module is based on the \texttt{bash} scripts in the \texttt{src/sat\_solving} folder from \cite{BaDamSteSto2026Repo}, its logic is significantly optimized to remove redundancy and improve the efficiency of extensive computation. The module can be executed using the following syntax:
\begin{lstlisting}[language = bash, frame = trBL, escapeinside={(*@}{@*)}, aboveskip=10pt, belowskip=10pt, numbers=none, rulecolor=\color{black}]
python grid_explorer.py <class_a> <class_b> <problem_type>
\end{lstlisting}
The three required arguments have the same meaning as in the previous modules. The \texttt{problem\_type} argument needs to be from the set $\{ \mathrm{ref}, \mathrm{dih} \}$ if at least one of the graph classes is the class of alternating paths, due to the use of Corollary \ref{cool_cor_3}.

The computation of Ramsey numbers is carried out via the \texttt{number\_finder.py} module by arranging the numbers in an infinite matrix in which the rows correspond to the orders of graphs from the first graph class, while the columns correspond to those from the second graph class. More precisely, the rows and columns correspond to the orders $3, 4, 5, \ldots$, if the graph class is not $M^\mathrm{nest}$, and $4, 6, 8, 10, \ldots$, otherwise. The entries are grouped by the minimum of their row and column indices in a layer-like manner, as shown in Table~\ref{collect_table}. The computation proceeds layer by layer, with the last processed layer being the one whose diagonal entry is the first to yield a timeout according to the configured SAT instance generation and Kissat time limits. Within each layer, the entries are computed from the diagonal entry onward to the right and downward. In each direction, the traversal is terminated after three occurrences of a timeout or once the graph order exceeds a configurable threshold. The computation of Ramsey numbers is optimized through the use of Corollaries~\ref{cool_cor_1}--\ref{cool_cor_3}. In addition, if the two graph classes corresponding to the rows and the columns are the same, then only the upper-triangular part of the infinite matrix, including the diagonal, is considered.

\begin{table}[h]
\centering
\renewcommand{\arraystretch}{1.75}
\setlength{\tabcolsep}{10pt}
\setlength{\extrarowheight}{-1.25pt}
\setlength{\arrayrulewidth}{0.55pt}
\begin{tabular}{ccccc}
    \cline{1-5}
    \multicolumn{1}{|c}{$R_\mathrm{dih}(P^\mathrm{mon}_3, M^\mathrm{nest}_4)$} & $R_\mathrm{dih}(P^\mathrm{mon}_3, M^\mathrm{nest}_6)$ & $R_\mathrm{dih}(P^\mathrm{mon}_3, M^\mathrm{nest}_8)$ & $R_\mathrm{dih}(P^\mathrm{mon}_3, M^\mathrm{nest}_{10})$ & $\cdots$\\ \cline{2-5}
    \multicolumn{1}{|c}{$R_\mathrm{dih}(P^\mathrm{mon}_4, M^\mathrm{nest}_4)$} & \multicolumn{1}{|c}{$R_\mathrm{dih}(P^\mathrm{mon}_4, M^\mathrm{nest}_6)$} & $R_\mathrm{dih}(P^\mathrm{mon}_4, M^\mathrm{nest}_8)$ & $R_\mathrm{dih}(P^\mathrm{mon}_4, M^\mathrm{nest}_{10})$ & $\cdots$ \\ \cline{3-5}
    \multicolumn{1}{|c}{$R_\mathrm{dih}(P^\mathrm{mon}_5, M^\mathrm{nest}_4)$} & \multicolumn{1}{|c}{$R_\mathrm{dih}(P^\mathrm{mon}_5, M^\mathrm{nest}_6)$} & \multicolumn{1}{|c}{$R_\mathrm{dih}(P^\mathrm{mon}_5, M^\mathrm{nest}_8)$} & $R_\mathrm{dih}(P^\mathrm{mon}_5, M^\mathrm{nest}_{10})$ & $\cdots$ \\ \cline{4-5}
    \multicolumn{1}{|c}{$R_\mathrm{dih}(P^\mathrm{mon}_6, M^\mathrm{nest}_4)$} & \multicolumn{1}{|c}{$R_\mathrm{dih}(P^\mathrm{mon}_6, M^\mathrm{nest}_6)$} & \multicolumn{1}{|c}{$R_\mathrm{dih}(P^\mathrm{mon}_6, M^\mathrm{nest}_8)$} & \multicolumn{1}{|c}{$R_\mathrm{dih}(P^\mathrm{mon}_6, M^\mathrm{nest}_{10})$} & $\cdots$ \\ \cline{5-5}
    \multicolumn{1}{|c}{$\vdots$} & \multicolumn{1}{|c}{$\vdots$} & \multicolumn{1}{|c}{$\vdots$} & \multicolumn{1}{|c}{$\vdots$} & \multicolumn{1}{|c}{$\ddots$} \\
\end{tabular}
\caption{The infinite matrix containing all Ramsey numbers of the form $R_\mathrm{dih}(P_a^\mathrm{mon}, M_b^\mathrm{nest})$, with entries grouped by the minimum of their row and column indices.}
\label{collect_table}
\end{table}

The \texttt{grid\_explorer.py} module also saves the computed results to a CSV file with the header \texttt{a, b, exact, number}. Here, the columns \texttt{a} and \texttt{b} specify the graph orders of the two arguments, while \texttt{number} stores the computed value. The \texttt{exact} column contains a Boolean flag indicating whether the value in the \texttt{number} column has been determined to be the exact Ramsey number or is only known to be a lower bound. The auxiliary script \texttt{table\_generator.py}, whose source code is mostly based on the \texttt{src/sat\_solving/kissat\_output\_parser.py} script from \cite{BaDamSteSto2026Repo}, can then be used to read all the saved CSV files and automatically generate \LaTeX\ code that displays the computed results in tables.

\begin{example}
Consider the following command:
\begin{lstlisting}[language = bash, frame = trBL, escapeinside={(*@}{@*)}, aboveskip=10pt, belowskip=10pt, numbers=none, rulecolor=\color{black}]
python grid_explorer.py palt pmon ref
\end{lstlisting}
This command can be used to automate the computation of Ramsey numbers of the form $R_\mathrm{ref}(P^\mathrm{alt}_a, P^\mathrm{mon}_b)$ with $a, b \ge 3$. Next, consider the following command:
\begin{lstlisting}[language = bash, frame = trBL, escapeinside={(*@}{@*)}, aboveskip=10pt, belowskip=10pt, numbers=none, rulecolor=\color{black}]
python table_generator.py
\end{lstlisting}
The execution of this command scans the saved CSV file and automatically generates the \LaTeX\ code for the corresponding table. \hfill$\Diamond$
\end{example}

\section{Results}\label{sc:results}

In this section, we present the computational results obtained by executing the software described in Section~\ref{sc:meth}, together with several general results and conjectures motivated by the computational findings. With these results and conjectures in mind, Table \ref{summary_tab} summarizes the known and conjectured formulas for the ordered, reflective, cyclic and dihedral Ramsey numbers of the considered combinations of graph classes. All the computational results presented below were obtained by executing the software on an Apple MacBook Pro (Apple~M4~Pro, 24~GB RAM, macOS Sequoia 15.7.5), while the \LaTeX\ source code for Tables \ref{palt_palt_ref_tab}--\ref{ssc_mnest_ref_tab} was automatically generated by the \texttt{table\_generator.py} script. The time limit for generating SAT instances was set to 2 minutes, and the time limit for the Kissat SAT solver was set to 3 minutes. When using the \texttt{grid\_explorer.py} script, an order threshold of 30 was employed for both arguments.

We begin with a theorem that completely determines the permutational Ramsey numbers in which one argument is a monotone path with a trivial permutation group, while the other is any connected graph with an arbitrary permutation group. The upper bound was implicitly shown by Károlyi, Pach and Tóth \cite{KaPaTo1997}, as remarked by Balko, Cibulka, Král and Kynčl \cite[Lemma~18]{BalCiKralKyn2020}, while the lower bound can be established similarly to \cite[Theorem~4.1]{BaDamSteSto2026}. For completeness, we provide the full proof.

\begin{table}[t]
\centering
\resizebox{\textwidth}{!}{
\begin{tabular}{|c|c|c|c|c|}
\hline
Argument $1$ & Argument $2$ & Type & Status & Formula\\
\hline
\hline
\multirow{4}{*}{alternating path} & \multirow{4}{*}{alternating path} & ordered & --- & --- \\
\cline{3-5}
& & reflective & --- & --- \\
\cline{3-5}
& & cyclic & conjectured \cite[Conjecture~4.8]{BaDamSteSto2026} & $a + b - 2 - (ab \bmod 2)$ for $a, b \ge 2$\\
\cline{3-5}
& & dihedral & conjectured (Conjecture~\ref{palt_palt_dih_conj}) & $a + b - 2 - (ab \bmod 2)$ for $a, b \ge 2$\\
\hline
\multirow{4}{*}{alternating path} & \multirow{4}{*}{start-central star} & ordered & --- & --- \\
\cline{3-5}
& & reflective & --- & --- \\
\cline{3-5}
& & cyclic & conjectured \cite[Conjecture~4.17]{BaDamSteSto2026} & $a + b - 2 - (ab \bmod 2)$ for $a, b \ge 2$\\
\cline{3-5}
& & dihedral & conjectured (Conjecture \ref{palt_ssc_dih_conj}) & $a + b - 2 - (ab \bmod 2)$ for $a, b \ge 2$\\
\hline
\multirow{4}{*}{alternating path} & \multirow{4}{*}{monotone path} & ordered & solved \cite[Corollary~4.2]{BaDamSteSto2026} & $1 + (a - 1)(b - 1)$ for $a, b \in \mathbb{N}$\\
\cline{3-5}
& & reflective & solved (Corollary~\ref{palt_pmon_ref_cor}) & $1 + (a - 1)(b - 1)$ for $a, b \in \mathbb{N}$\\
\cline{3-5}
& & cyclic & conjectured \cite[Conjecture~4.9]{BaDamSteSto2026} & $1 + (a - 1)(b - 2)$ for $a \ge 3$, $b \ge 4$\\
\cline{3-5}
& & dihedral & conjectured (Conjecture \ref{palt_pmon_dih_conj}) & $1 + (a - 1)(b - 2)$ for $a \ge 3$, $b \ge 4$\\
\hline
\multirow{4}{*}{alternating path} & \multirow{4}{*}{monotone cycle} & ordered & conjectured \cite[Conjecture~4.13]{BaDamSteSto2026} & $\lceil (a - 1)(b - \frac{1}{2}) \rceil$ for $a \ge 2$, $b \ge 3$ \\
\cline{3-5}
& & reflective & conjectured (Conjecture \ref{palt_cmon_ref_conj}) & $\lfloor (a - 1)(b - \frac{1}{2}) \rfloor$ for $a, b \ge 3$\\
\cline{3-5}
& & cyclic & conjectured \cite[Conjecture~4.14]{BaDamSteSto2026} & $1 + (a - 1)(b - 1)$ for $a \in \mathbb{N}$, $b \ge 2$\\
\cline{3-5}
& & dihedral & conjectured (Conjecture \ref{palt_cmon_dih_conj}) & $1 + (a - 1)(b - 1)$ for $a \in \mathbb{N}$, $b \ge 2$\\
\hline
\multirow{4}{*}{alternating path} & \multirow{4}{*}{complete graph} & ordered & conjectured \cite[Conjecture~4.22]{BaDamSteSto2026} & $\lceil \frac{3ab - 5b}{2} \rceil - 2a + 5$ for $a \ge 3$, $b \ge 2$\\
\cline{3-5}
& & reflective & --- & --- \\
\cline{3-5}
& & cyclic & conjectured \cite[Conjecture~4.23]{BaDamSteSto2026} & $1 + (a - 1)(b - 1)$ for $a, b \in \mathbb{N}$\\
\cline{3-5}
& & dihedral & conjectured (Conjecture \ref{palt_k_dih_conj}) & $1 + (a - 1)(b - 1)$ for $a, b \in \mathbb{N}$\\
\hline
\multirow{4}{*}{alternating path} & \multirow{4}{*}{nested matching} & ordered & --- & --- \\
\cline{3-5}
& & reflective & --- & --- \\
\cline{3-5}
& & cyclic & --- & --- \\
\cline{3-5}
& & dihedral & --- & --- \\
\hline
\multirow{3}{*}{start-central star} & \multirow{3}{*}{start-central star} & ordered & solved \cite[Observation~12]{BalCiKralKyn2020} & $a + b - 2$ for $a, b \ge 2$\\
\cline{3-5}
& & reflective & solved (Proposition \ref{ssc_ssc_ref_prop}) & $a + b - 2$ for $a, b \ge 2$ \\
\cline{3-5}
& & cyclic & solved \cite{BurrRo1973} & $a + b - 2 - (ab \bmod 2)$ for $a, b \ge 2$\\
\hline
\multirow{3}{*}{start-central star} & \multirow{3}{*}{monotone path} & ordered & solved \cite[Corollary~4.4]{BaDamSteSto2026} & $1 + (a - 1)(b - 1)$ for $a, b \in \mathbb{N}$\\
\cline{3-5}
& & reflective & solved (Corollary \ref{ssc_pmon_ref_cor}) & $1 + (a - 1)(b - 1)$ for $a, b \in \mathbb{N}$ \\
\cline{3-5}
& & cyclic & conjectured \cite[Conjecture~4.16]{BaDamSteSto2026} & $1 + (a - 1)(b - 2)$ for $a \ge 3$, $b \ge 4$\\
\hline
\multirow{3}{*}{start-central star} & \multirow{3}{*}{monotone cycle} & ordered & solved \cite[Theorem~4.18]{BaDamSteSto2026} & $1 + (a - 1)(b - 1)$ for $a \in \mathbb{N}$, $b \ge 2$ \\
\cline{3-5}
& & reflective & solved (Proposition \ref{ssc_cmon_ref_prop}) & $1 + (a - 1)(b - 1)$ for $a \in \mathbb{N}$, $b \ge 2$\\
\cline{3-5}
& & cyclic & solved \cite[Corollary~4.19]{BaDamSteSto2026} & $1 + (a - 1)(b - 1)$ for $a \in \mathbb{N}$, $b \ge 2$ \\
\hline
\multirow{3}{*}{start-central star} & \multirow{3}{*}{complete graph} & ordered & solved \cite[Theorem~4.24]{BaDamSteSto2026} & $1 + (a - 1)(b - 1)$ for $a, b \in \mathbb{N}$\\
\cline{3-5}
& & reflective & solved (Corollary \ref{ssc_k_ref_cor}) & $1 + (a - 1)(b - 1)$ for $a, b \in \mathbb{N}$ \\
\cline{3-5}
& & cyclic & solved \cite{Chvatal1977} & $1 + (a - 1)(b - 1)$ for $a, b \in \mathbb{N}$\\
\hline
\multirow{3}{*}[-12.3pt]{start-central star} & \multirow{3}{*}[-12.3pt]{nested matching} & ordered & --- & --- \\
\cline{3-5}
& & reflective & --- & --- \\
\cline{3-5}
& & cyclic & conjectured \cite[Conjecture~4.33]{BaDamSteSto2026} & \rule{0pt}{4.7ex} $\begin{cases}
            a + b - 3, & \mbox{if $2 \nmid a$ and $4 \mid b$},\\
            a + b - 2, & \mbox{otherwise}
        \end{cases}$ for $a \ge 2$ and even $b \ge 2$ \rule[-3.5ex]{0pt}{0pt} \\
\hline
\end{tabular}
}
\caption{The known and conjectured formulas for the ordered, reflective, cyclic and dihedral Ramsey numbers of alternating paths and start-central stars versus alternating paths, start-central stars, monotone paths, monotone cycles, complete graphs and nested matchings. The dihedral Ramsey numbers of start-central stars versus start-central stars, monotone paths, monotone cycles, complete graphs and nested matchings are omitted, since they coincide with the corresponding cyclic Ramsey numbers. In the Formula column, $a$ and $b$ denote the orders of the first and second graph arguments, respectively. Entries marked ``---'' indicate that no general formula is currently known or conjectured.}
\label{summary_tab}
\end{table}

\begin{theorem}\label{brutal_th}
    For $a, b \in \mathbb{N}$, let $H$ be any connected graph of order $a$, $\Gamma$ any permutation group on $V(H)$, and $\Lambda$ the trivial permutation group on $V(P^\mathrm{mon}_b)$. Then $R\left( H^\Gamma, (P_b^\mathrm{mon})^\Lambda \right) = 1 + (a - 1)(b - 1)$.
\end{theorem}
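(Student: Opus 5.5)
We prove the two inequalities separately.

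For the upper bound, we use the classical Erdős--Szekeres-type labelling argument. Let $n = 1 + (a-1)(b-1)$ and fix a $2$-edge-coloring of $K_n$. Suppose that $P_b^\mathrm{mon}$ with trivial group is not embeddable in color $2$, i.e., there is no increasing sequence $v_0 < v_1 < \cdots < v_{b-1}$ with all consecutive edges $\{v_i, v_{i+1}\}$ of color $2$. For each vertex $v$, let $\ell(v)$ be the number of vertices in a longest such color-$2$ monotone path ending at $v$. Then $\ell(v) \in \{1, \ldots, b-1\}$, so by pigeonhole some label class $L$ has at least $a$ vertices. If $u < v$ lie in $L$ and $\{u,v\}$ had color $2$, then extending a longest path ending at $u$ by $v$ would give $\ell(v) \ge \ell(u)+1$, a contradiction. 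Hence $L$ spans a clique of color $1$ on at least $a$ vertices. Any graph $H$ of order $a$ then embeds into it via $\psi \circ \varphi$ with $\varphi = \mathrm{id} \in \Gamma$ and $\psi$ the increasing injection onto the first $a$ vertices of $L$. This gives $R \le 1 + (a-1)(b-1)$, and it needs neither connectivity nor any property of $\Gamma$.

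For the lower bound, we construct a coloring of $K_{(a-1)(b-1)}$ with no forbidden embedding. Partition the vertex set into $b-1$ consecutive intervals $I_1 < I_2 < \cdots < I_{b-1}$, each of size $a-1$. Color an edge $1$ if both endpoints lie in the same interval and $2$ otherwise.

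1. Color $1$ consists of disjoint cliques of order $a-1$. Since $H$ is connected, any homomorphism image of $H$ in color $1$ lies in one component, i.e., in a single interval. An injective map of $a$ vertices into $a-1$ vertices is impossible, so $H$ is not $\Gamma$-embeddable for any $\Gamma$. This is where connectivity is used, and the trivial case $a=1$ should be handled separately: then $n=1$ trivially works.

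2. Color $2$ is the complete $(b-1)$-partite graph on the intervals. An increasing monotone path $v_0 < \cdots < v_{b-1}$ in color $2$ must have consecutive vertices in different intervals. Because the vertices are increasing and the intervals are consecutive, the interval indices strictly increase along the path. This requires $b$ distinct intervals, which do not exist.

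So $R \ge 1 + (a-1)(b-1)$.

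The main subtlety is making sure the lower-bound coloring defeats every $\varphi \in \Gamma$. This holds because the color-$1$ argument uses only injectivity and connectivity, not order, while $P_b^\mathrm{mon}$ carries the trivial group, so only increasing embeddings need to be excluded. The rest is routine, apart from degenerate cases such as $b=1$, where both sides equal $1$.
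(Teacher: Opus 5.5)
Your proof is correct. The lower-bound coloring is exactly the paper's: consecutive blocks of size $a-1$, with color $1$ inside blocks and color $2$ across them. Your analysis of it, via connectivity and injectivity for color $1$ and strictly increasing block indices for color $2$, is also the paper's.

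For the upper bound you take a different, though closely related, route. The paper inducts on $b$. It uses the inductive hypothesis, applied repeatedly while deleting the endpoints already found (possible since $n - (1+(a-1)(b-2)) = a-1$), to obtain $a$ increasing color-$2$ copies of $P^\mathrm{mon}_{b-1}$ with pairwise distinct last vertices. Then either these $a$ endpoints span a color-$1$ clique, giving an increasing and hence $\Gamma$-embedding of $H$, or some edge among them has color $2$ and extends one of the paths to $P^\mathrm{mon}_b$.

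Your labelling by $\ell(v)$, the number of vertices of a longest increasing color-$2$ path ending at $v$, collapses this induction into a single pigeonhole step. A level set of $\ell$ of size $a$ is automatically a color-$1$ clique. Your version is shorter and fully self-contained, since the paper leaves the iterative extraction of the $a$ paths implicit. Otherwise the two arguments use the same ingredients: only $\mathrm{id} \in \Gamma$ is needed for the upper bound, and connectivity of $H$ matters only for the lower bound, as you correctly point out.
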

\begin{proof}
    The theorem trivially holds if $\min \{ a, b \} = 1$, so we assume that $a, b \ge 2$. First, we establish the upper bound by showing that every $2$-edge-coloring of $K_n$ with $n = 1 + (a - 1)(b - 1)$ contains a forbidden embedding. We carry out the proof by induction on $b$. To begin, the statement trivially holds for $b = 2$, so we assume that $b \ge 3$. Consider a $2$-edge-coloring of $K_n$, and let $G_j$ denote the spanning subgraph of $K_n$ comprising the edges of color~$j$, for $j \in \{ 1, 2 \}$. Suppose that $H$ is not $\Gamma$-embeddable in $G_1$. Then, since $\left( 1 + (a - 1)(b - 1) \right) - \left( 1 + (a - 1)(b - 2) \right) = a - 1$, by induction, there exist $a$ increasing injective homomorphisms $\varphi_1, \varphi_2, \ldots, \varphi_a$ from $P^\mathrm{mon}_{b - 1}$ to $G_2$ such that the numbers $\varphi_1(b - 2), \varphi_2(b - 2), \ldots, \varphi_a(b - 2)$ are all mutually distinct. If none of the edges between the vertices $\varphi_1(b - 2), \varphi_2(b - 2), \ldots, \varphi_a(b - 2)$ is colored in color $2$, then there exists an increasing injective homomorphism from $H$ to $G_1$, yielding a contradiction. Thus, at least one of these edges is colored in color $2$, implying the existence of an increasing injective homomorphism from $P^\mathrm{mon}_b$ to $G_2$. Therefore, $P^\mathrm{mon}_b$ is $\Lambda$-embeddable in $G_2$, and hence $R\left( H^\Gamma, (P_b^\mathrm{mon})^\Lambda \right) \le 1 + (a - 1)(b - 1)$.

    Next, we establish the lower bound by constructing a $2$-edge-coloring of $K_n$ with $n = (a - 1)(b - 1)$ containing no forbidden embeddings. Let $f \colon \{ 0, 1, 2, \ldots, n - 1 \} \to \{ 0, 1, 2, \ldots, b - 2 \}$ be the function defined by $f(x) \coloneqq \lfloor \frac{x}{a - 1} \rfloor$, and consider the $2$-edge-coloring of $K_n$ in which the edge between any two distinct vertices $u, v \in V(K_n)$ is colored with color~$1$ if and only if $f(u) = f(v)$. Let $G_1$ and $G_2$ be defined as above. By way of contradiction, suppose that $H$ is $\Gamma$-embeddable in $G_1$. Then there exists an injective homomorphism from $H$ to $G_1$. Since $H$ is connected, this means that some component of $G_1$ contains at least $a$ vertices, which yields a contradiction because all components of $G_1$ are isomorphic to $K_{a - 1}$.

    Now, by way of contradiction, suppose that $P^\mathrm{mon}_b$ is $\Lambda$-embeddable in $G_2$. Then there exists an increasing injective homomorphism $\varphi$ from $P^\mathrm{mon}_b$ to $G_2$. Since $f$ is nondecreasing, and $f(\varphi(v)) \neq f(\varphi(v + 1))$ holds for every $v \in \{ 0, 1, 2, \ldots, b - 2 \}$, it follows that the function $f \circ \varphi \colon \{ 0, 1, 2, \ldots, b - 1 \} \to \{ 0, 1, 2, \ldots, b - 2 \}$ is strictly increasing, which is clearly impossible. Therefore, $R\left( H^\Gamma, (P_b^\mathrm{mon})^\Lambda \right) \ge 1 + (a - 1)(b - 1)$.
\end{proof}

\noindent
The following results are immediate from Theorem \ref{brutal_th}, Proposition \ref{equiv_class} and the fact that $P^\mathrm{mon}_b$ has reflection symmetry.

\begin{corollary}\label{palt_pmon_ref_cor}
    For any $a, b \in \mathbb{N}$, we have $R_\mathrm{ref}(P^\mathrm{alt}_a, P^\mathrm{mon}_b) = 1 + (a - 1)(b - 1)$.
\end{corollary}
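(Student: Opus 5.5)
The plan is to reduce the reflective Ramsey number to the permutational Ramsey number covered by Theorem~\ref{brutal_th}, using Proposition~\ref{equiv_class} to switch the group attached to the monotone path. By definition, $R_\mathrm{ref}(P^\mathrm{alt}_a, P^\mathrm{mon}_b) = R\bigl((P^\mathrm{alt}_a)^{\Gamma}, (P^\mathrm{mon}_b)^{\Lambda'}\bigr)$, where $\Gamma$ is the reflection group on $V(P^\mathrm{alt}_a)$ and $\Lambda'$ is the reflection group on $V(P^\mathrm{mon}_b)$. Let $\Lambda$ denote the trivial group on $V(P^\mathrm{mon}_b)$.

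First, I check that $P^\mathrm{mon}_b$ lies in the same equivalence class under $\cong_{\Lambda'}$ and under $\cong_{\Lambda}$. Its $\cong_\Lambda$-class is $\{P^\mathrm{mon}_b\}$. Its $\cong_{\Lambda'}$-class consists of the images of $P^\mathrm{mon}_b$ under the identity and under $x \mapsto b-1-x$. The reflection sends the edge $\{x, x+1\}$ to $\{b-2-x, b-1-x\}$, which is again an edge between consecutive integers, so the image is $P^\mathrm{mon}_b$ itself. Hence the two classes coincide; this is just the reflection symmetry of $P^\mathrm{mon}_b$. Keep $\Gamma$ unchanged on the first argument, so that argument trivially has the same class under both choices.

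Next, Proposition~\ref{equiv_class} gives $R\bigl((P^\mathrm{alt}_a)^{\Gamma}, (P^\mathrm{mon}_b)^{\Lambda'}\bigr) = R\bigl((P^\mathrm{alt}_a)^{\Gamma}, (P^\mathrm{mon}_b)^{\Lambda}\bigr)$. Since $P^\mathrm{alt}_a$ is a path and hence connected, of order $a$, Theorem~\ref{brutal_th} applies with $H = P^\mathrm{alt}_a$ and the arbitrary group $\Gamma$. It yields the value $1 + (a-1)(b-1)$.

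There is no real obstacle. The only points to verify are that $P^\mathrm{alt}_a$ is connected for every $a \in \mathbb{N}$, including the degenerate $a = 1$, which Theorem~\ref{brutal_th} already covers, and that reflection symmetry is exactly the hypothesis needed for Proposition~\ref{equiv_class}.
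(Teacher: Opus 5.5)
Your proposal is correct and follows the paper's argument. The paper obtains this corollary directly from Theorem~\ref{brutal_th}, Proposition~\ref{equiv_class} and the reflection symmetry of $P^\mathrm{mon}_b$: it swaps the reflection group on the monotone path for the trivial group and applies the theorem with the connected graph $H = P^\mathrm{alt}_a$.
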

\begin{corollary}\label{ssc_pmon_ref_cor}
    For any $a, b \in \mathbb{N}$, we have $R_\mathrm{ref}(S^\mathrm{sc}_a, P^\mathrm{mon}_b) = 1 + (a - 1)(b - 1)$.
\end{corollary}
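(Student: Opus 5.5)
The plan is to apply Theorem~\ref{brutal_th} with $H = S^\mathrm{sc}_a$, which is connected for every $a \in \mathbb{N}$. We take $\Gamma$ to be the reflection group on $V(S^\mathrm{sc}_a)$, and we want to pass from the trivial group $\Lambda$ on $V(P^\mathrm{mon}_b)$ to the reflection group there using Proposition~\ref{equiv_class}.

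\textbf{Step 1.} Theorem~\ref{brutal_th} directly gives
\[
R\left( (S^\mathrm{sc}_a)^{\Gamma}, (P^\mathrm{mon}_b)^{\Lambda} \right) = 1 + (a-1)(b-1),
\]
where $\Gamma$ is the reflection group on $V(S^\mathrm{sc}_a)$ and $\Lambda$ is the trivial group on $V(P^\mathrm{mon}_b)$.

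\textbf{Step 2.} Let $\Lambda'$ be the reflection group on $V(P^\mathrm{mon}_b)$. The map $x \mapsto b-1-x$ sends consecutive integers to consecutive integers, so it is an automorphism of $P^\mathrm{mon}_b$. Hence the graph obtained by applying the reflection to $P^\mathrm{mon}_b$ is $P^\mathrm{mon}_b$ itself. This means the equivalence class of $P^\mathrm{mon}_b$ under $\cong_{\Lambda'}$ is $\{P^\mathrm{mon}_b\}$, which is also its class under $\cong_{\Lambda}$. The first argument keeps the same group $\Gamma$, so its class is trivially unchanged.

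\textbf{Step 3.} Proposition~\ref{equiv_class} then shows that replacing $\Lambda$ by $\Lambda'$ does not change the Ramsey number. Therefore
\[
R_\mathrm{ref}(S^\mathrm{sc}_a, P^\mathrm{mon}_b) = R\left( (S^\mathrm{sc}_a)^{\Gamma}, (P^\mathrm{mon}_b)^{\Lambda} \right) = 1 + (a-1)(b-1).
\]

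The only point needing any care is the degenerate case $b = 1$. There the reflection group is trivial, so both groups coincide and the argument still applies, and Theorem~\ref{brutal_th} already covers $\min\{a,b\}=1$. I do not expect any real obstacle; the identical argument gives Corollary~\ref{palt_pmon_ref_cor}.
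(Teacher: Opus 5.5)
Your proposal is correct and is essentially the paper's own argument. The paper obtains this corollary directly from Theorem~\ref{brutal_th} (taking $H = S^\mathrm{sc}_a$, which is connected, with the reflection group), combined with Proposition~\ref{equiv_class} and the reflection symmetry of $P^\mathrm{mon}_b$, which is exactly your chain of steps.
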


We proceed by studying the reflective and dihedral Ramsey numbers of alternating paths versus alternating paths. Our computational results are given in Tables \ref{palt_palt_ref_tab} and \ref{palt_palt_dih_tab}. As it turns out, the reflective Ramsey numbers are similar, but not identical, to the corresponding ordered Ramsey numbers, and they exhibit a pattern that is difficult to discern. On the other hand, the dihedral Ramsey numbers appear to coincide with the cyclic Ramsey numbers, leading to the following conjecture.

\begin{conjecture}\label{palt_palt_dih_conj}
    For any $a, b \ge 2$, we have $R_\mathrm{dih}(P_a^\mathrm{alt}, P_b^\mathrm{alt}) = a + b - 2 - (ab \bmod 2)$.
\end{conjecture}

\begin{table}[H]
\centering
\resizebox{\textwidth}{!}{
\begin{tabular}{|c||rrrrrrrrrrrrrrrrrrrrrrrrr|}
\hline
\backslashbox{$a$}{$b$} & $3$ & $4$ & $5$ & $6$ & $7$ & $8$ & $9$ & $10$ & $11$ & $12$ & $13$ & $14$ & $15$ & $16$ & $17$ & $18$ & $19$ & $20$ & $21$ & $22$ & $23$ & $24$ & $25$ & $26$ & $27$\\
\hline
\hline
$3$ & $4$ & $6$ & $7$ & $8$ & $9$ & $11$ & $12$ & $13$ & $14$ & $15$ & $16$ & $18$ & $19$ & $20$ & $21$ & $22$ & $23$ & $24$ & $25$ & $27$ & $28$ & $29$ & $\ge 30$ & $\ge 31$ & $\ge 32$\\
$4$ & & $7$ & $8$ & $10$ & $11$ & $12$ & $14$ & $15$ & $16$ & $17$ & $18$ & $20$ & $21$ & $22$ & $\ge 23$ & $\ge 24$ & $\ge 25$ & & & & & & & &\\
$5$ & & & $9$ & $11$ & $12$ & $14$ & $15$ & $17$ & $18$ & $19$ & $\ge 20$ & $\ge 21$ & $\ge 23$ & & & & & & & & & & & &\\
$6$ & & & & $12$ & $13$ & $15$ & $16$ & $18$ & $\ge 19$ & $\ge 21$ & $\ge 22$ & & & & & & & & & & & & & &\\
$7$ & & & & & $14$ & $16$ & $18$ & $\ge 19$ & $\ge 21$ & $\ge 22$ & & & & & & & & & & & & & & &\\
$8$ & & & & & & $17$ & $\ge 19$ & $\ge 20$ & $\ge 22$ & & & & & & & & & & & & & & & &\\
$9$ & & & & & & & $\ge 20$ & $\ge 21$ & $\ge 23$ & & & & & & & & & & & & & & & &\\
\hline
\end{tabular}
}
\caption{Reflective Ramsey numbers $R_\mathrm{ref}(P^\mathrm{alt}_a, P^\mathrm{alt}_b)$ with $3 \le a \le b$, $a \le 9$ and $b \le 27$.}
\label{palt_palt_ref_tab}
\end{table}

\begin{table}[H]
\centering
\resizebox{\textwidth}{!}{
\begin{tabular}{|c||rrrrrrrrrrrrrrrrrrrrrrrr|}
\hline
\backslashbox{$a$}{$b$} & $3$ & $4$ & $5$ & $6$ & $7$ & $8$ & $9$ & $10$ & $11$ & $12$ & $13$ & $14$ & $15$ & $16$ & $17$ & $18$ & $19$ & $20$ & $21$ & $22$ & $23$ & $24$ & $25$ & $26$\\
\hline
\hline
$3$ & $3$ & $5$ & $5$ & $7$ & $7$ & $9$ & $9$ & $11$ & $11$ & $13$ & $13$ & $15$ & $15$ & $17$ & $17$ & $19$ & $19$ & $21$ & $21$ & $23$ & $\ge 23$ & $25$ & $\ge 25$ & $\ge 27$\\
$4$ & & $6$ & $7$ & $8$ & $9$ & $10$ & $11$ & $12$ & $13$ & $14$ & $15$ & $16$ & $\ge 17$ & $\ge 18$ & $\ge 19$ & & & & & & & & &\\
$5$ & & & $7$ & $9$ & $9$ & $11$ & $11$ & $13$ & $\ge 13$ & $\ge 15$ & $\ge 15$ & & & & & & & & & & & & &\\
$6$ & & & & $10$ & $11$ & $12$ & $13$ & $\ge 14$ & $\ge 15$ & $\ge 16$ & & & & & & & & & & & & & &\\
$7$ & & & & & $\ge 11$ & $13$ & $\ge 13$ & $\ge 15$ & & & & & & & & & & & & & & & &\\
\hline
\end{tabular}
}
\caption{Dihedral Ramsey numbers $R_\mathrm{dih}(P^\mathrm{alt}_a, P^\mathrm{alt}_b)$ with $3 \le a \le b$, $a \le 7$ and $b \le 26$.}
\label{palt_palt_dih_tab}
\end{table}

Now, consider the computational results for the reflective and dihedral Ramsey numbers of alternating paths versus start-central stars; see Tables \ref{palt_ssc_ref_tab} and \ref{palt_ssc_dih_tab}. Once again, the reflective Ramsey numbers appear to be similar, but not identical, to the corresponding ordered Ramsey numbers, and their structure seems difficult to understand. An interesting observation about the dihedral Ramsey numbers is that exact values were obtained for all the numbers in the first row of Table~\ref{palt_ssc_dih_tab} with $b \le 18$, whereas in the second row exact values were obtained for all $b \le 30$. The results suggest that Kissat unexpectedly struggles with the case $a = 3$ and odd $b \ge 19$. Nonetheless, the dihedral Ramsey numbers appear to coincide with the cyclic Ramsey numbers, which leads to the following conjecture.

\begin{conjecture}\label{palt_ssc_dih_conj}
    For any $a, b \ge 2$, we have $R_\mathrm{dih}(P^\mathrm{alt}_a, S^\mathrm{sc}_b) = a + b - 2 - (ab \bmod 2)$.
\end{conjecture}

The reflective Ramsey numbers of alternating paths versus monotone paths are fully determined by Corollary~\ref{palt_pmon_ref_cor}, and they coincide with the corresponding ordered Ramsey numbers; see Table~\ref{summary_tab}. The dihedral Ramsey numbers also appear to coincide with the cyclic Ramsey numbers, as suggested by the computational results in Table \ref{palt_pmon_dih_tab}. Thus, we pose the following conjecture.

\begin{conjecture}\label{palt_pmon_dih_conj}
    For any $a \ge 3$ and $b \ge 4$, we have $R_\mathrm{dih}(P_a^\mathrm{alt}, P_b^\mathrm{mon}) = 1 + (a - 1)(b - 2)$.
\end{conjecture}

\newpage

\begin{table}[H]
\centering
\resizebox{\textwidth}{!}{
\begin{tabular}{|c||rrrrrrrrrrrrrrrrrrrrrrrrrrrr|}
\hline
\backslashbox{$a$}{$b$} & $3$ & $4$ & $5$ & $6$ & $7$ & $8$ & $9$ & $10$ & $11$ & $12$ & $13$ & $14$ & $15$ & $16$ & $17$ & $18$ & $19$ & $20$ & $21$ & $22$ & $23$ & $24$ & $25$ & $26$ & $27$ & $28$ & $29$ & $30$\\
\hline
\hline
$3$ & $4$ & $5$ & $6$ & $7$ & $8$ & $9$ & $10$ & $11$ & $12$ & $13$ & $14$ & $15$ & $16$ & $17$ & $18$ & $19$ & $20$ & $21$ & $22$ & $23$ & $24$ & $25$ & $26$ & $27$ & $28$ & $29$ & $30$ & $31$\\
$4$ & $6$ & $7$ & $8$ & $10$ & $11$ & $12$ & $13$ & $15$ & $16$ & $17$ & $18$ & $19$ & $21$ & $22$ & $23$ & $\ge 24$ & $\ge 25$ & $\ge 26$ & & & & & & & & & &\\
$5$ & $7$ & $8$ & $10$ & $11$ & $12$ & $14$ & $15$ & $16$ & $17$ & $\ge 19$ & $\ge 20$ & $\ge 21$ & & & & & & & & & & & & & & & &\\
$6$ & $8$ & $10$ & $11$ & $13$ & $14$ & $16$ & $17$ & $\ge 18$ & $\ge 19$ & $\ge 21$ & & & & & & & & & & & & & & & & & &\\
$7$ & $9$ & $11$ & $13$ & $14$ & $15$ & $\ge 17$ & $\ge 18$ & $\ge 20$ & & & & & & & & & & & & & & & & & & & &\\
$8$ & $11$ & $12$ & $14$ & $16$ & $\ge 17$ & $\ge 19$ & $\ge 20$ & $\ge 21$ & & & & & & & & & & & & & & & & & & & &\\
$9$ & $12$ & $14$ & $15$ & $17$ & $\ge 18$ & $\ge 20$ & & & & & & & & & & & & & & & & & & & & & &\\
$10$ & $13$ & $15$ & $17$ & $\ge 19$ & $\ge 20$ & $\ge 22$ & & & & & & & & & & & & & & & & & & & & & &\\
$11$ & $14$ & $16$ & $18$ & $\ge 20$ & & & & & & & & & & & & & & & & & & & & & & & &\\
$12$ & $15$ & $18$ & $\ge 19$ & $\ge 21$ & & & & & & & & & & & & & & & & & & & & & & & &\\
$13$ & $16$ & $19$ & $\ge 20$ & & & & & & & & & & & & & & & & & & & & & & & & &\\
$14$ & $18$ & $20$ & $\ge 22$ & & & & & & & & & & & & & & & & & & & & & & & & &\\
$15$ & $19$ & $\ge 21$ & & & & & & & & & & & & & & & & & & & & & & & & & &\\
$16$ & $20$ & $\ge 22$ & & & & & & & & & & & & & & & & & & & & & & & & & &\\
$17$ & $21$ & $\ge 23$ & & & & & & & & & & & & & & & & & & & & & & & & & &\\
$18$ & $22$ & & & & & & & & & & & & & & & & & & & & & & & & & & &\\
$19$ & $23$ & & & & & & & & & & & & & & & & & & & & & & & & & & &\\
$20$ & $24$ & & & & & & & & & & & & & & & & & & & & & & & & & & &\\
$21$ & $25$ & & & & & & & & & & & & & & & & & & & & & & & & & & &\\
$22$ & $27$ & & & & & & & & & & & & & & & & & & & & & & & & & & &\\
$23$ & $28$ & & & & & & & & & & & & & & & & & & & & & & & & & & &\\
$24$ & $29$ & & & & & & & & & & & & & & & & & & & & & & & & & & &\\
$25$ & $\ge 30$ & & & & & & & & & & & & & & & & & & & & & & & & & & &\\
$26$ & $\ge 31$ & & & & & & & & & & & & & & & & & & & & & & & & & & &\\
$27$ & $\ge 32$ & & & & & & & & & & & & & & & & & & & & & & & & & & &\\
\hline
\end{tabular}
}
\caption{Reflective Ramsey numbers $R_\mathrm{ref}(P^\mathrm{alt}_a, S^\mathrm{sc}_b)$ with $3 \le a \le 27$ and $3 \le b \le 30$.}
\label{palt_ssc_ref_tab}
\end{table}

\begin{table}[H]
\centering
\resizebox{\textwidth}{!}{
\begin{tabular}{|c||rrrrrrrrrrrrrrrrrrrrrrrrrrrr|}
\hline
\backslashbox{$a$}{$b$} & $3$ & $4$ & $5$ & $6$ & $7$ & $8$ & $9$ & $10$ & $11$ & $12$ & $13$ & $14$ & $15$ & $16$ & $17$ & $18$ & $19$ & $20$ & $21$ & $22$ & $23$ & $24$ & $25$ & $26$ & $27$ & $28$ & $29$ & $30$\\
\hline
\hline
$3$ & $3$ & $5$ & $5$ & $7$ & $7$ & $9$ & $9$ & $11$ & $11$ & $13$ & $13$ & $15$ & $15$ & $17$ & $17$ & $19$ & $\ge 19$ & $21$ & $\ge 21$ & $23$ & $\ge 23$ & & & & & & &\\
$4$ & $5$ & $6$ & $7$ & $8$ & $9$ & $10$ & $11$ & $12$ & $13$ & $14$ & $15$ & $16$ & $17$ & $18$ & $19$ & $20$ & $21$ & $22$ & $23$ & $24$ & $25$ & $26$ & $27$ & $28$ & $29$ & $30$ & $31$ & $32$\\
$5$ & $5$ & $7$ & $7$ & $9$ & $9$ & $11$ & $11$ & $13$ & $\ge 13$ & $\ge 15$ & $\ge 15$ & & & & & & & & & & & & & & & & &\\
$6$ & $7$ & $8$ & $9$ & $10$ & $11$ & $12$ & $\ge 13$ & $\ge 14$ & $\ge 15$ & & & & & & & & & & & & & & & & & & &\\
$7$ & $7$ & $9$ & $9$ & $11$ & $\ge 11$ & $\ge 13$ & $\ge 13$ & & & & & & & & & & & & & & & & & & & & &\\
$8$ & $9$ & $10$ & $11$ & $12$ & $\ge 13$ & & & & & & & & & & & & & & & & & & & & & & &\\
$9$ & $9$ & $11$ & $11$ & $\ge 13$ & $\ge 13$ & & & & & & & & & & & & & & & & & & & & & & &\\
$10$ & $11$ & $12$ & $13$ & $\ge 14$ & & & & & & & & & & & & & & & & & & & & & & & &\\
$11$ & $11$ & $13$ & $\ge 13$ & $\ge 15$ & & & & & & & & & & & & & & & & & & & & & & & &\\
$12$ & $13$ & $14$ & $\ge 15$ & & & & & & & & & & & & & & & & & & & & & & & & &\\
$13$ & $13$ & $15$ & $\ge 15$ & & & & & & & & & & & & & & & & & & & & & & & & &\\
$14$ & $15$ & $16$ & & & & & & & & & & & & & & & & & & & & & & & & & &\\
$15$ & $15$ & $\ge 17$ & & & & & & & & & & & & & & & & & & & & & & & & & &\\
$16$ & $17$ & $\ge 18$ & & & & & & & & & & & & & & & & & & & & & & & & & &\\
$17$ & $17$ & $\ge 19$ & & & & & & & & & & & & & & & & & & & & & & & & & &\\
$18$ & $19$ & & & & & & & & & & & & & & & & & & & & & & & & & & &\\
$19$ & $19$ & & & & & & & & & & & & & & & & & & & & & & & & & & &\\
$20$ & $21$ & & & & & & & & & & & & & & & & & & & & & & & & & & &\\
$21$ & $21$ & & & & & & & & & & & & & & & & & & & & & & & & & & &\\
$22$ & $23$ & & & & & & & & & & & & & & & & & & & & & & & & & & &\\
$23$ & $\ge 23$ & & & & & & & & & & & & & & & & & & & & & & & & & & &\\
$24$ & $25$ & & & & & & & & & & & & & & & & & & & & & & & & & & &\\
$25$ & $\ge 25$ & & & & & & & & & & & & & & & & & & & & & & & & & & &\\
$26$ & $\ge 27$ & & & & & & & & & & & & & & & & & & & & & & & & & & &\\
\hline
\end{tabular}
}
\caption{Dihedral Ramsey numbers $R_\mathrm{dih}(P^\mathrm{alt}_a, S^\mathrm{sc}_b)$ with $3 \le a \le 26$ and $3 \le b \le 30$.}
\label{palt_ssc_dih_tab}
\end{table}

\begin{table}[H]
\centering
\resizebox{0.7\textwidth}{!}{
\begin{tabular}{|c||rrrrrrrrrrrrrr|}
\hline
\backslashbox{$a$}{$b$} & $3$ & $4$ & $5$ & $6$ & $7$ & $8$ & $9$ & $10$ & $11$ & $12$ & $13$ & $14$ & $15$ & $16$\\
\hline
\hline
$3$ & $3$ & $5$ & $7$ & $9$ & $11$ & $13$ & $15$ & $17$ & $19$ & $21$ & $23$ & $\ge 24$ & $\ge 25$ & $\ge 25$\\
$4$ & $5$ & $7$ & $10$ & $13$ & $16$ & $19$ & $22$ & $\ge 25$ & $\ge 25$ & $\ge 25$ & & & &\\
$5$ & $5$ & $9$ & $13$ & $17$ & $21$ & $25$ & $\ge 26$ & $\ge 26$ & $\ge 26$ & & & & &\\
$6$ & $7$ & $11$ & $16$ & $21$ & $26$ & $\ge 29$ & $\ge 29$ & $\ge 29$ & & & & & &\\
$7$ & $7$ & $13$ & $19$ & $25$ & $\ge 31$ & $\ge 31$ & $\ge 31$ & & & & & & &\\
$8$ & $9$ & $15$ & $22$ & $\ge 29$ & $\ge 31$ & & & & & & & & &\\
$9$ & $9$ & $17$ & $\ge 25$ & $\ge 29$ & $\ge 31$ & & & & & & & & &\\
$10$ & $11$ & $19$ & $\ge 25$ & $\ge 29$ & & & & & & & & & &\\
$11$ & $11$ & $\ge 21$ & $\ge 25$ & & & & & & & & & & &\\
$12$ & $13$ & $\ge 23$ & & & & & & & & & & & &\\
$13$ & $13$ & $\ge 24$ & & & & & & & & & & & &\\
$14$ & $15$ & & & & & & & & & & & & &\\
$15$ & $15$ & & & & & & & & & & & & &\\
$16$ & $17$ & & & & & & & & & & & & &\\
$17$ & $17$ & & & & & & & & & & & & &\\
$18$ & $19$ & & & & & & & & & & & & &\\
$19$ & $19$ & & & & & & & & & & & & &\\
$20$ & $21$ & & & & & & & & & & & & &\\
$21$ & $21$ & & & & & & & & & & & & &\\
$22$ & $23$ & & & & & & & & & & & & &\\
$23$ & $\ge 23$ & & & & & & & & & & & & &\\
$24$ & $25$ & & & & & & & & & & & & &\\
$25$ & $\ge 25$ & & & & & & & & & & & & &\\
$26$ & $\ge 27$ & & & & & & & & & & & & &\\
\hline
\end{tabular}
}
\caption{Dihedral Ramsey numbers $R_\mathrm{dih}(P^\mathrm{alt}_a, P^\mathrm{mon}_b)$ with $3 \le a \le 26$ and $3 \le b \le 16$.}
\label{palt_pmon_dih_tab}
\end{table}

The computational results for the reflective Ramsey numbers of alternating paths versus monotone cycles suggest that, provided $a, b \ge 3$, we have $R_\mathrm{ref}(P^\mathrm{alt}_a, C^\mathrm{mon}_b) = R_\mathrm{ord}(P^\mathrm{alt}_a, C^\mathrm{mon}_b)$ if $a$ is odd, and $R_\mathrm{ref}(P^\mathrm{alt}_a, C^\mathrm{mon}_b) = R_\mathrm{ord}(P^\mathrm{alt}_a, C^\mathrm{mon}_b) - 1$ if $a$ is even; see Table \ref{palt_cmon_ref_tab}. This points to a close relationship between the two types of Ramsey numbers. As for the dihedral Ramsey numbers, they once again appear to coincide with the cyclic Ramsey numbers, as shown in Table \ref{palt_cmon_dih_tab}. We therefore pose the following two conjectures.

\begin{conjecture}\label{palt_cmon_ref_conj}
    For any $a, b \ge 3$, we have $R_\mathrm{ref}(P^\mathrm{alt}_a, C^\mathrm{mon}_b) = \lfloor (a - 1)(b - \frac{1}{2}) \rfloor$.
\end{conjecture}

\begin{conjecture}\label{palt_cmon_dih_conj}
    For any $a \in \mathbb{N}$ and $b \ge 2$, we have $R_\mathrm{dih}(P^\mathrm{alt}_a, C^\mathrm{mon}_b) = 1 + (a - 1)(b - 1)$.
\end{conjecture}

\begin{table}[H]
\centering
\footnotesize
\begin{tabular}{|c||rrrrrrrrrrrrrr|}
\hline
\backslashbox{$a$}{$b$} & $3$ & $4$ & $5$ & $6$ & $7$ & $8$ & $9$ & $10$ & $11$ & $12$ & $13$ & $14$ & $15$ & $16$\\
\hline
\hline
$3$ & $5$ & $7$ & $9$ & $11$ & $13$ & $15$ & $17$ & $19$ & $21$ & $23$ & $25$ & $\ge 27$ & $\ge 28$ & $\ge 28$\\
$4$ & $7$ & $10$ & $13$ & $16$ & $19$ & $22$ & $\ge 25$ & $\ge 28$ & $\ge 29$ & & & & &\\
$5$ & $10$ & $14$ & $18$ & $22$ & $\ge 26$ & $\ge 30$ & $\ge 33$ & & & & & & &\\
$6$ & $12$ & $17$ & $22$ & $\ge 27$ & $\ge 32$ & $\ge 36$ & & & & & & & &\\
$7$ & $15$ & $\ge 21$ & $\ge 27$ & $\ge 33$ & & & & & & & & & &\\
$8$ & $17$ & $\ge 24$ & $\ge 31$ & $\ge 36$ & & & & & & & & & &\\
$9$ & $\ge 20$ & $\ge 27$ & $\ge 33$ & & & & & & & & & & &\\
$10$ & $\ge 22$ & & & & & & & & & & & & &\\
$11$ & $\ge 25$ & & & & & & & & & & & & &\\
\hline
\end{tabular}
\caption{Reflective Ramsey numbers $R_\mathrm{ref}(P^\mathrm{alt}_a, C^\mathrm{mon}_b)$ with $3 \le a \le 11$ and $3 \le b \le 16$.}
\label{palt_cmon_ref_tab}
\end{table}

\begin{table}[H]
\centering
\footnotesize
\begin{tabular}{|c||rrrrrrrrrrrrr|}
\hline
\backslashbox{$a$}{$b$} & $3$ & $4$ & $5$ & $6$ & $7$ & $8$ & $9$ & $10$ & $11$ & $12$ & $13$ & $14$ & $15$\\
\hline
\hline
$3$ & $5$ & $7$ & $9$ & $11$ & $13$ & $15$ & $17$ & $19$ & $21$ & $23$ & $\ge 24$ & $\ge 24$ & $\ge 24$\\
$4$ & $7$ & $10$ & $13$ & $16$ & $19$ & $22$ & $\ge 25$ & $\ge 25$ & $\ge 24$ & & & &\\
$5$ & $9$ & $13$ & $17$ & $21$ & $\ge 25$ & $\ge 27$ & $\ge 27$ & & & & & &\\
$6$ & $11$ & $16$ & $21$ & $\ge 26$ & $\ge 29$ & $\ge 29$ & & & & & & &\\
$7$ & $13$ & $19$ & $\ge 25$ & $\ge 29$ & & & & & & & & &\\
$8$ & $15$ & $\ge 22$ & $\ge 26$ & $\ge 30$ & & & & & & & & &\\
$9$ & $\ge 17$ & $\ge 24$ & $\ge 27$ & & & & & & & & & &\\
$10$ & $\ge 19$ & $\ge 24$ & & & & & & & & & & &\\
$11$ & $\ge 21$ & & & & & & & & & & & &\\
\hline
\end{tabular}
\caption{Dihedral Ramsey numbers $R_\mathrm{dih}(P^\mathrm{alt}_a, C^\mathrm{mon}_b)$ with $3 \le a \le 11$ and $3 \le b \le 15$.}
\label{palt_cmon_dih_tab}
\end{table}

The computational results for the reflective and dihedral Ramsey numbers of alternating paths versus complete graphs are given in Tables \ref{palt_k_ref_tab} and \ref{palt_k_dih_tab}. Although the reflective Ramsey numbers seem to have values similar to the corresponding ordered Ramsey numbers, the underlying pattern is difficult to discern. The dihedral Ramsey numbers, on the other hand, appear to coincide with the cyclic Ramsey numbers, leading to the following conjecture.

\begin{conjecture}\label{palt_k_dih_conj}
    For any $a, b \in \mathbb{N}$, we have $R_\mathrm{dih}(P^\mathrm{alt}_a, K_b) = 1 + (a - 1)(b - 1)$.
\end{conjecture}

We conclude the investigation of Ramsey numbers involving alternating paths with Tables \ref{palt_mnest_ref_tab} and \ref{palt_mnest_dih_tab}, which contain the computational results for the reflective and dihedral Ramsey numbers of alternating paths versus nested matchings. As in the case of ordered and cyclic Ramsey numbers, the reflective and dihedral Ramsey numbers for this combination of graph classes also exhibit a structure that seems difficult to discern.

\begin{table}[H]
\centering
\footnotesize
\begin{tabular}{|c||rrrrrrrrr|}
\hline
\backslashbox{$a$}{$b$} & $3$ & $4$ & $5$ & $6$ & $7$ & $8$ & $9$ & $10$ & $11$\\
\hline
\hline
$3$ & $5$ & $7$ & $9$ & $11$ & $13$ & $15$ & $\ge 17$ & $\ge 19$ & $\ge 21$\\
$4$ & $7$ & $11$ & $14$ & $\ge 17$ & $\ge 21$ & $\ge 24$ & & &\\
$5$ & $10$ & $14$ & $\ge 19$ & $\ge 23$ & $\ge 28$ & & & &\\
$6$ & $12$ & $\ge 18$ & $\ge 24$ & & & & & &\\
$7$ & $15$ & $\ge 22$ & $\ge 28$ & & & & & &\\
$8$ & $17$ & $\ge 25$ & & & & & & &\\
$9$ & $\ge 20$ & & & & & & & &\\
$10$ & $\ge 22$ & & & & & & & &\\
$11$ & $\ge 25$ & & & & & & & &\\
\hline
\end{tabular}
\caption{Reflective Ramsey numbers $R_\mathrm{ref}(P^\mathrm{alt}_a, K_b)$ with $3 \le a \le 11$ and $3 \le b \le 11$.}
\label{palt_k_ref_tab}
\end{table}

\begin{table}[H]
\centering
\resizebox{0.85\textwidth}{!}{
\begin{tabular}{|c||rrrrrrrrrrrrrr|}
\hline
\backslashbox{$a$}{$b$} & $4$ & $6$ & $8$ & $10$ & $12$ & $14$ & $16$ & $18$ & $20$ & $22$ & $24$ & $26$ & $28$ & $30$\\
\hline
\hline
$3$ & $6$ & $8$ & $11$ & $13$ & $15$ & $18$ & $20$ & $22$ & $24$ & $27$ & $29$ & $31$ & $33$ & $35$\\
$4$ & $7$ & $10$ & $12$ & $15$ & $17$ & $20$ & $22$ & $24$ & $27$ & $\ge 29$ & $\ge 31$ & $\ge 33$ & &\\
$5$ & $8$ & $11$ & $14$ & $17$ & $19$ & $21$ & $24$ & $\ge 26$ & $\ge 28$ & $\ge 31$ & & & &\\
$6$ & $9$ & $12$ & $15$ & $18$ & $21$ & $23$ & $\ge 26$ & $\ge 28$ & $\ge 30$ & & & & &\\
$7$ & $10$ & $13$ & $16$ & $19$ & $22$ & $\ge 25$ & $\ge 27$ & $\ge 29$ & & & & & &\\
$8$ & $11$ & $14$ & $17$ & $20$ & $23$ & $\ge 26$ & $\ge 28$ & $\ge 29$ & & & & & &\\
$9$ & $12$ & $15$ & $18$ & $21$ & $\ge 24$ & $\ge 26$ & & & & & & & &\\
$10$ & $13$ & $16$ & $19$ & $22$ & $\ge 25$ & $\ge 27$ & & & & & & & &\\
$11$ & $14$ & $17$ & $20$ & $23$ & $\ge 26$ & & & & & & & & &\\
$12$ & $15$ & $18$ & $21$ & $\ge 24$ & & & & & & & & & &\\
$13$ & $16$ & $19$ & $22$ & $\ge 25$ & & & & & & & & & &\\
$14$ & $17$ & $20$ & $23$ & $\ge 26$ & & & & & & & & & &\\
$15$ & $18$ & $21$ & $24$ & & & & & & & & & & &\\
$16$ & $19$ & $22$ & $25$ & & & & & & & & & & &\\
$17$ & $20$ & $23$ & $\ge 26$ & & & & & & & & & & &\\
$18$ & $21$ & $24$ & $\ge 27$ & & & & & & & & & & &\\
$19$ & $22$ & $25$ & $\ge 28$ & & & & & & & & & & &\\
$20$ & $23$ & $26$ & & & & & & & & & & & &\\
$21$ & $24$ & $27$ & & & & & & & & & & & &\\
$22$ & $25$ & $28$ & & & & & & & & & & & &\\
$23$ & $26$ & $29$ & & & & & & & & & & & &\\
$24$ & $27$ & $30$ & & & & & & & & & & & &\\
$25$ & $28$ & $31$ & & & & & & & & & & & &\\
$26$ & $29$ & $32$ & & & & & & & & & & & &\\
$27$ & $30$ & $33$ & & & & & & & & & & & &\\
$28$ & $31$ & $34$ & & & & & & & & & & & &\\
$29$ & $32$ & $35$ & & & & & & & & & & & &\\
$30$ & $33$ & $36$ & & & & & & & & & & & &\\
\hline
\end{tabular}
}
\caption{Reflective Ramsey numbers $R_\mathrm{ref}(P^\mathrm{alt}_a, M^\mathrm{nest}_b)$ with any $3 \le a \le 30$ and even $4 \le b \le 30$.}
\label{palt_mnest_ref_tab}
\end{table}

\begin{table}[H]
\centering
\resizebox{0.85\textwidth}{!}{
\begin{tabular}{|c||rrrrrrrrrrrrrr|}
\hline
\backslashbox{$a$}{$b$} & $4$ & $6$ & $8$ & $10$ & $12$ & $14$ & $16$ & $18$ & $20$ & $22$ & $24$ & $26$ & $28$ & $30$\\
\hline
\hline
$3$ & $4$ & $7$ & $8$ & $11$ & $12$ & $15$ & $16$ & $19$ & $20$ & $23$ & $24$ & $27$ & $\ge 28$ & $31$\\
$4$ & $6$ & $8$ & $10$ & $12$ & $14$ & $16$ & $18$ & $20$ & $\ge 22$ & $\ge 24$ & $\ge 26$ & & &\\
$5$ & $6$ & $9$ & $10$ & $13$ & $15$ & $17$ & $\ge 19$ & $\ge 21$ & $\ge 23$ & & & & &\\
$6$ & $8$ & $10$ & $12$ & $14$ & $16$ & $\ge 18$ & $\ge 20$ & $\ge 22$ & & & & & &\\
$7$ & $8$ & $11$ & $12$ & $15$ & $\ge 17$ & $\ge 19$ & $\ge 21$ & & & & & & &\\
$8$ & $10$ & $12$ & $14$ & $16$ & $\ge 18$ & & & & & & & & &\\
$9$ & $10$ & $13$ & $14$ & $17$ & $\ge 18$ & & & & & & & & &\\
$10$ & $12$ & $14$ & $16$ & $\ge 18$ & & & & & & & & & &\\
$11$ & $12$ & $15$ & $16$ & $\ge 19$ & & & & & & & & & &\\
$12$ & $14$ & $16$ & $18$ & $\ge 20$ & & & & & & & & & &\\
$13$ & $14$ & $17$ & $\ge 18$ & & & & & & & & & & &\\
$14$ & $16$ & $18$ & $20$ & & & & & & & & & & &\\
$15$ & $16$ & $19$ & $\ge 20$ & & & & & & & & & & &\\
$16$ & $18$ & $20$ & $22$ & & & & & & & & & & &\\
$17$ & $18$ & $21$ & $\ge 22$ & & & & & & & & & & &\\
$18$ & $20$ & $22$ & & & & & & & & & & & &\\
$19$ & $20$ & $23$ & & & & & & & & & & & &\\
$20$ & $22$ & $24$ & & & & & & & & & & & &\\
$21$ & $22$ & $25$ & & & & & & & & & & & &\\
$22$ & $24$ & $26$ & & & & & & & & & & & &\\
$23$ & $24$ & $27$ & & & & & & & & & & & &\\
$24$ & $26$ & $28$ & & & & & & & & & & & &\\
$25$ & $26$ & $29$ & & & & & & & & & & & &\\
$26$ & $28$ & $30$ & & & & & & & & & & & &\\
$27$ & $28$ & $31$ & & & & & & & & & & & &\\
$28$ & $30$ & $32$ & & & & & & & & & & & &\\
$29$ & $30$ & $33$ & & & & & & & & & & & &\\
$30$ & $32$ & $34$ & & & & & & & & & & & &\\
\hline
\end{tabular}
}
\caption{Dihedral Ramsey numbers $R_\mathrm{dih}(P^\mathrm{alt}_a, M^\mathrm{nest}_b)$ with any $3 \le a \le 30$ and even $4 \le b \le 30$.}
\label{palt_mnest_dih_tab}
\end{table}

\begin{table}[H]
\centering
\footnotesize
\begin{tabular}{|c||rrrrrrrrr|}
\hline
\backslashbox{$a$}{$b$} & $3$ & $4$ & $5$ & $6$ & $7$ & $8$ & $9$ & $10$ & $11$\\
\hline
\hline
$3$ & $5$ & $7$ & $9$ & $11$ & $13$ & $15$ & $\ge 17$ & $\ge 19$ & $\ge 21$\\
$4$ & $7$ & $10$ & $13$ & $\ge 16$ & $\ge 19$ & $\ge 22$ & & &\\
$5$ & $9$ & $13$ & $\ge 17$ & $\ge 21$ & $\ge 25$ & & & &\\
$6$ & $11$ & $16$ & $\ge 21$ & & & & & &\\
$7$ & $13$ & $\ge 19$ & $\ge 25$ & & & & & &\\
$8$ & $15$ & $\ge 22$ & & & & & & &\\
$9$ & $\ge 17$ & $\ge 25$ & & & & & & &\\
$10$ & $\ge 19$ & & & & & & & &\\
$11$ & $\ge 21$ & & & & & & & &\\
\hline
\end{tabular}
\caption{Dihedral Ramsey numbers $R_\mathrm{dih}(P^\mathrm{alt}_a, K_b)$ with $3 \le a \le 11$ and $3 \le b \le 11$.}
\label{palt_k_dih_tab}
\end{table}

The ordered and reflective Ramsey numbers of start-central stars versus start-central stars are straightforward to determine, as shown in the following proposition, which directly extends an earlier result by Balko, Cibulka, Král and Kynčl \cite[Observation~12]{BalCiKralKyn2020}.

\begin{proposition}\label{ssc_ssc_ref_prop}
    For any $k \in \mathbb{N}$ and $n_1, n_2, \ldots, n_k \ge 2$, we have
    \[
        R_\mathrm{ord}(S^\mathrm{sc}_{n_1}, S^\mathrm{sc}_{n_2}, \ldots, S^\mathrm{sc}_{n_k}) = R_\mathrm{ref}(S^\mathrm{sc}_{n_1}, S^\mathrm{sc}_{n_2}, \ldots, S^\mathrm{sc}_{n_k}) = \sum_{j = 1}^k (n_j - 2) + 2 .
    \]
\end{proposition}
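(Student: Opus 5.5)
Set $N = \sum_{j=1}^k (n_j - 2) + 2$. By Proposition~\ref{sandwich_prop}, since the trivial group is a subgroup of the reflection group, $R_\mathrm{ref}(S^\mathrm{sc}_{n_1}, \ldots, S^\mathrm{sc}_{n_k}) \le R_\mathrm{ord}(S^\mathrm{sc}_{n_1}, \ldots, S^\mathrm{sc}_{n_k})$. It therefore suffices to prove two things: the upper bound $R_\mathrm{ord} \le N$, and the lower bound $R_\mathrm{ref} \ge N$, the latter via a coloring of $K_{N-1}$ with no forbidden reflective embedding.

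For the upper bound, take any $k$-edge-coloring of $K_N$ and look at vertex $0$. It has $N - 1 = \sum_j (n_j - 2) + 1$ forward neighbours. By pigeonhole, some color $j$ appears on at least $n_j - 1$ of the edges from $0$. Sending $0$ to $0$ and the leaves $1, \ldots, n_j - 1$ increasingly onto these neighbours gives an increasing injective homomorphism from $S^\mathrm{sc}_{n_j}$ into color $j$.

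For the lower bound, note that a reflected embedding of $S^\mathrm{sc}_{n}$ is exactly a vertex $v$ together with $n-1$ neighbours of the same color, all smaller than $v$. An unreflected embedding is a vertex with $n-1$ same-colored neighbours all larger than it. So we need a coloring of $K_{N-1}$ in which, for every vertex $v$ and color $j$, $v$ has at most $n_j - 2$ color-$j$ neighbours above it and at most $n_j - 2$ below it.

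The natural construction partitions $\{0, \ldots, N-2\}$ into consecutive intervals $I_1, \ldots, I_k$ with $|I_j| = n_j - 2$, plus one leftover vertex since $N - 1 = \sum (n_j-2) + 1$. The plan is then to color each edge $\{u,v\}$ with $u<v$ according to the position of $v$ relative to $u$ in a cyclic arrangement, in the style of a circulant coloring.

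A cleaner approach for the lower bound is to use the fact that the key constraint is about degrees on each side. Take the circulant coloring on $\mathbb{Z}_{N-1}$: assign to each nonzero difference $d \in \{1,\ldots,N-2\}$ a color so that color $j$ gets exactly $n_j-2$ differences, and make the assignment symmetric under $d \mapsto N-1-d$. Then color the edge $\{u,v\}$ with $u<v$ by $v-u$. The number of color-$j$ neighbours above $u$ is then at most the number of color-$j$ differences, which is $n_j - 2$, and likewise below $u$. Symmetry is not even needed, since any edge has a unique difference $v-u \in \{1,\ldots,N-2\}$. This colors $K_{N-1}$ so that every vertex has at most $n_j-2$ color-$j$ forward neighbours and at most $n_j-2$ backward neighbours, which completes the proof.

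The main point to verify carefully is that a reflected $\Gamma$-embedding of the start-central star really corresponds to a centre lying after all its leaves. This follows because the reflection maps $0$ to $|H|-1$, and composing with an increasing injection places the centre image last.
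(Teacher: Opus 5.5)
Your proof is correct and is essentially the paper's argument. Pigeonhole on the $N-1$ edges at vertex $0$ gives the upper bound, and coloring each edge $\{u,v\}$ of $K_{N-1}$ by the difference $|v-u|$, with exactly $n_j-2$ differences assigned to color $j$, ensures that every vertex has at most $n_j-2$ color-$j$ neighbours on each side, which excludes both the unreflected and the reflected embedding of $S^\mathrm{sc}_{n_j}$. The paper's coloring is the special case where each color receives a consecutive block of differences, your reduction via Proposition~\ref{sandwich_prop} is equivalent to the paper handling both types at once, and the circulant detour with the symmetry $d \mapsto N-1-d$ can simply be deleted.
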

\begin{proof}
    Let $\Gamma_1, \Gamma_2, \ldots, \Gamma_k$ be the permutation groups associated with $S^\mathrm{sc}_{n_1}, S^\mathrm{sc}_{n_2}, \ldots, S^\mathrm{sc}_{n_k}$, respectively, in the considered permutational Ramsey number $R_\mathrm{type}(S^\mathrm{sc}_{n_1}, S^\mathrm{sc}_{n_2}, \ldots, S^\mathrm{sc}_{n_k})$ with $\mathrm{type} \in \{ \mathrm{ord}, \mathrm{ref} \}$. First, we establish the upper bound. Let $n = \sum_{j = 1}^k (n_j - 2) + 2$, and consider a $k$-edge-coloring of $K_n$. Let $G_j$ denote the spanning subgraph of $K_n$ comprising the edges of color $j$, for $j \in \{ 1, 2, \ldots, k \}$. By the pigeonhole principle, vertex $0$ is incident to at least $n_j - 1$ edges of color $j$, for some $j \in \{ 1, 2, \ldots, k \}$. Thus, there exists an increasing injective homomorphism from $S^\mathrm{sc}_{n_j}$ to $G_j$. Therefore, $S^\mathrm{sc}_{n_j}$ is $\Gamma_j$-embeddable in $G_j$, which implies that $R_\mathrm{type}(S^\mathrm{sc}_{n_1}, S^\mathrm{sc}_{n_2}, \ldots, S^\mathrm{sc}_{n_k}) \le \sum_{j = 1}^k (n_j - 2) + 2$.

    Now, we establish the lower bound. Let $n = \sum_{j = 1}^k (n_j - 2) + 1$, and consider the $k$-edge-coloring of $K_n$ in which the edge between any two distinct vertices $u, v \in V(K_n)$ is colored with the smallest $\ell \in \{ 1, 2, \ldots, k \}$ such that $|v - u| \le \sum_{j = 1}^{\ell} (n_j - 2)$. Let $G_1, G_2, \ldots, G_k$ be defined as above, and by way of contradiction, suppose that, for some $j \in \{ 1, 2, \ldots, k \}$, $S^\mathrm{sc}_{n_j}$ is $\Gamma_j$-embeddable in $G_j$. Then there exists a strictly monotone injective homomorphism $\varphi \colon S^\mathrm{sc}_{n_j} \to G_j$. Therefore, in $G_j$, vertex $\varphi(0)$ has at least $n_j - 1$ smaller neighbors or at least $n_j - 1$ larger neighbors, which is impossible. We conclude that $R_\mathrm{type}(S^\mathrm{sc}_{n_1}, S^\mathrm{sc}_{n_2}, \ldots, S^\mathrm{sc}_{n_k}) \ge \sum_{j = 1}^k (n_j - 2) + 2$.
\end{proof}

\noindent
We note that the cyclic Ramsey numbers of start-central stars versus start-central stars coincide with the corresponding standard Ramsey numbers, which were determined by Burr and Roberts \cite{BurrRo1973}.

The reflective Ramsey numbers of start-central stars versus monotone paths are fully determined by Corollary~\ref{ssc_pmon_ref_cor}, and they coincide with the corresponding ordered Ramsey numbers; see Table \ref{summary_tab}. As for start-central stars versus monotone cycles, the ordered and cyclic Ramsey numbers were recently determined \cite{BaDamSteSto2026}, as follows.

\begin{theorem}[\hspace{1sp}{\cite[Theorem~4.18 and Corollary~4.19]{BaDamSteSto2026}}]\label{ssc_cmon_ord_cyc_th}
    For any $a \in \mathbb{N}$ and $b \ge 2$, we have $R_\mathrm{ord}(S_a^\mathrm{sc}, C_b^\mathrm{mon}) = R_\mathrm{cyc}(S_a^\mathrm{sc}, C_b^\mathrm{mon}) = 1 + (a - 1)(b - 1)$.
\end{theorem}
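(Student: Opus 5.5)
Since this is Theorem 4.18 and Corollary 4.19 of \cite{BaDamSteSto2026}, the plan is to reprove it by combining a sandwich argument with the two bounds. First, the cyclic group contains the trivial group, so Proposition~\ref{sandwich_prop} gives $R_\mathrm{cyc}(S_a^\mathrm{sc}, C_b^\mathrm{mon}) \le R_\mathrm{ord}(S_a^\mathrm{sc}, C_b^\mathrm{mon})$. It therefore suffices to prove two things: the ordered upper bound $R_\mathrm{ord} \le 1 + (a-1)(b-1)$, and the cyclic lower bound $R_\mathrm{cyc} \ge 1 + (a-1)(b-1)$. The cases $\min\{a, b-1\} = 1$ are trivial, and $b = 2$ reduces to $C_2^\mathrm{mon} = K_2$, so we may assume $a \ge 2$ and $b \ge 3$.

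For the lower bound, I would reuse the construction from the proof of Theorem~\ref{brutal_th}. Take $n = (a-1)(b-1)$ and set $f(x) = \lfloor x/(a-1) \rfloor$. Color an edge with color $1$ exactly when its endpoints lie in the same block. Every vertex then has color-$1$ degree $a-2$, so the star $S_a^\mathrm{sc}$, which needs a vertex of degree $a-1$, cannot embed in color $1$ under any permutation. Color $2$ is the complete $(b-1)$-partite graph with parts the blocks. Any injective homomorphic image of the cycle $C_b$ in color $2$ uses $b$ vertices, so by pigeonhole two of its vertices share a block. However, that alone does not forbid a cycle, so I need to use the cyclic structure.

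Under a cyclic shift, the image of $C_b^\mathrm{mon}$ is the increasing sequence $\psi(0) < \dots < \psi(b-1)$ with consecutive edges together with the edge $\{\psi(0), \psi(b-1)\}$. Consecutive vertices must lie in distinct blocks, and $f$ is nondecreasing, so $f \circ \psi$ is strictly increasing on $b$ points into $b-1$ values. This is impossible, and the cycle is excluded.

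For the ordered upper bound, take a $2$-coloring of $K_n$ with $n = 1 + (a-1)(b-1)$ and no color-$1$ start-central star, so that each vertex $v$ has fewer than $a-1$ color-$1$ neighbors greater than $v$. I would label each vertex $v$ by the length $L(v)$ of the longest increasing color-$2$ path starting at $v$ and ending at vertex $n-1$, or more flexibly by the longest such path ending at $v$ that starts at vertex $0$. The goal is a closed increasing cycle through $0$ and a large vertex. My approach is to define $L(v)$ as the maximum number of vertices on an increasing color-$2$ path from $0$ to $v$, and to argue by induction, as in Theorem~\ref{brutal_th}, that some vertex $v$ with $L(v) \ge b$ is joined to $0$ in color $2$.

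The main obstacle is this closing step: the path must also end at a vertex adjacent to $0$ in color $2$. Vertex $0$ has at most $a-2$ color-$1$ neighbors, so all but $a-2$ vertices are color-$2$ neighbors of $0$. I would show that the vertices with $L(v) = \ell$ span no color-$2$ increasing edge, and hence form a set on which every vertex has fewer than $a-1$ larger color-$1$ neighbors. Such a set is a color-$1$ clique with forward degree less than $a-1$, so it has size at most $a-1$. This forces at least $a-1$ vertices at each level, and hence level $b-1$ or higher contains more than $a-2$ vertices, at least one of which is a color-$2$ neighbor of $0$. The counting needs care, because level $1$ is just $\{0\}$; I expect to redefine levels on $\{1, \dots, n-1\}$ using paths that start from a color-$2$ neighbor of $0$, which closes the cycle through $0$.
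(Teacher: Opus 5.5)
Your sandwich reduction and your lower-bound construction are correct: the block coloring has color-$1$ degree $a-2$ at every vertex, every cyclic shift is an automorphism of $C_b^\mathrm{mon}$ (so a cyclic embedding is just an increasing one), and $f\circ\psi$ would have to be strictly increasing from $b$ points into $b-1$ values. The paper itself gives no proof of this statement; it cites \cite{BaDamSteSto2026}. The gap is in your ordered upper bound. An ordered copy of $C_b^\mathrm{mon}$ needs an increasing color-$2$ path with \emph{exactly} $b$ vertices whose two ends are joined in color~$2$. With $L(v)$ measured from $0$, finding a color-$2$ neighbor $v$ of $0$ with $L(v)\ge b$, or at ``level $b-1$ or higher'', only yields a monotone color-$2$ cycle through $0$ of uncontrolled length. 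A longer monotone cycle does not contain $C_b^\mathrm{mon}$, and you cannot shorten the path, because its intermediate vertices need not be color-$2$ neighbors of $0$. (A vertex at level exactly $b-1$ would even give a cycle on $b-1$ vertices.) Your counting also runs the wrong way: the clique argument shows each level has \emph{at most} $a-1$ vertices, not at least. Moreover, color-$1$ neighbors of $0$ may be unreachable from $0$, so they fall outside your levels altogether. The repair in your last sentence, requiring only that the path \emph{start} at a color-$2$ neighbor of $0$, has the same defect at the other end.

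The missing idea is to confine the \emph{entire} path to the set $N$ of color-$2$ neighbors of $0$. Since $0$ has at most $a-2$ larger color-$1$ neighbors, $|N|\ge (a-1)(b-1)-(a-2)=1+(a-1)(b-2)$. Now run your level argument inside $N$, with levels defined by longest increasing color-$2$ paths lying in $N$. Each level is a color-$1$ clique and hence has at most $a-1$ vertices, so some such path has at least $b-1$ vertices. Equivalently, apply the upper bound of Theorem~\ref{brutal_th}, with $H=S_a^\mathrm{sc}$, the trivial group and $P^\mathrm{mon}_{b-1}$, to the coloring induced on $N$. Any $b-1$ consecutive vertices $v_1<\cdots<v_{b-1}$ of that path lie in $N$, so the edges $0v_1$ and $0v_{b-1}$ have color $2$. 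Hence $0,v_1,\ldots,v_{b-1}$ is an increasing color-$2$ copy of $C_b^\mathrm{mon}$. This gives $R_\mathrm{ord}(S_a^\mathrm{sc},C_b^\mathrm{mon})\le 1+(a-1)(b-1)$ and closes your sandwich.
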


\noindent
The reflective Ramsey numbers can now easily be determined as shown in the following proposition.

\begin{proposition}\label{ssc_cmon_ref_prop}
    For any $a \in \mathbb{N}$ and $b \ge 2$, we have $R_\mathrm{ref}(S_a^\mathrm{sc}, C_b^\mathrm{mon}) = 1 + (a - 1)(b - 1)$.
\end{proposition}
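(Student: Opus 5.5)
The plan is to sandwich the reflective number between the cyclic and ordered numbers and then apply Theorem~\ref{ssc_cmon_ord_cyc_th}.

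First I will handle the trivial case $a = 1$ separately. There $S^\mathrm{sc}_1$ is a single vertex, so every Ramsey number considered equals $1 = 1 + 0 \cdot (b-1)$.

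For $a \ge 2$ (or in general), I will build the chain of groups. The trivial group on $V(S^\mathrm{sc}_a)$ is contained in the reflection group, which is in turn contained in the dihedral group. On $V(C^\mathrm{mon}_b)$ the same chain holds.

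Proposition~\ref{sandwich_prop} then gives
\[
R_\mathrm{dih}(S_a^\mathrm{sc}, C_b^\mathrm{mon}) \le R_\mathrm{ref}(S_a^\mathrm{sc}, C_b^\mathrm{mon}) \le R_\mathrm{ord}(S_a^\mathrm{sc}, C_b^\mathrm{mon}).
\]
The right-hand side equals $1 + (a-1)(b-1)$ by Theorem~\ref{ssc_cmon_ord_cyc_th}.

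For the left-hand side I will use Corollary~\ref{ref_sym_ssc_cor}. The first argument is a start-central star, and $C^\mathrm{mon}_b$ has reflection symmetry: for $b \ge 3$ the map $x \mapsto b-1-x$ preserves consecutive pairs and the edge $\{0,b-1\}$, and for $b = 2$ we have $C^\mathrm{mon}_2 = K_2$. Hence $R_\mathrm{dih}$ equals $R_\mathrm{cyc}$, which is also $1+(a-1)(b-1)$ by Theorem~\ref{ssc_cmon_ord_cyc_th}.

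The two bounds coincide, which proves the claim. No step is a real obstacle; the only care needed is checking the hypotheses of Corollary~\ref{ref_sym_ssc_cor}, namely the reflection symmetry of $C^\mathrm{mon}_b$ including the convention for $b=2$.
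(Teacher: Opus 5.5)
Your proof is correct. The upper bound $R_\mathrm{ref} \le R_\mathrm{ord} = 1+(a-1)(b-1)$ is exactly the paper's; the difference lies in the lower bound.

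\emph{The paper's lower bound.} It never passes through the dihedral number. Since $P^\mathrm{mon}_b$ is a spanning subgraph of $C^\mathrm{mon}_b$ on the same vertex set, with the same reflection group, any coloring with no reflective embedding of $P^\mathrm{mon}_b$ has none of $C^\mathrm{mon}_b$ either. Corollary~\ref{ssc_pmon_ref_cor}, i.e.\ Theorem~\ref{brutal_th}, then gives $R_\mathrm{ref}(S^\mathrm{sc}_a, C^\mathrm{mon}_b) \ge R_\mathrm{ref}(S^\mathrm{sc}_a, P^\mathrm{mon}_b) = 1+(a-1)(b-1)$.

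\emph{Your lower bound.} You close the sandwich from below with $R_\mathrm{dih} \le R_\mathrm{ref}$. You then identify $R_\mathrm{dih}$ with $R_\mathrm{cyc}$ via Corollary~\ref{ref_sym_ssc_cor}; your check that $C^\mathrm{mon}_b$ has reflection symmetry, including the convention for $b=2$, is exactly the hypothesis needed. Finally you use the cyclic half of Theorem~\ref{ssc_cmon_ord_cyc_th}.

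\emph{What each route buys.} Yours is purely group-theoretic and shows in one stroke that the ordered, reflective, cyclic and dihedral numbers all coincide. However, it rests on both halves of the cited external result. The paper's route needs only the ordered half of that result. Its lower bound is witnessed explicitly by the block coloring built in the proof of Theorem~\ref{brutal_th}, which is proved within the paper.

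Your separate treatment of $a=1$ is harmless but unnecessary, since Theorem~\ref{ssc_cmon_ord_cyc_th} already covers all $a \in \mathbb{N}$.
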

\begin{proof}
    Since $P^\mathrm{mon}_b$ is a subgraph of $C^\mathrm{mon}_b$, Corollary \ref{ssc_pmon_ref_cor} yields
    \[
        R_\mathrm{ref}(S_a^\mathrm{sc}, C_b^\mathrm{mon}) \ge R_\mathrm{ref}(S_a^\mathrm{sc}, P_b^\mathrm{mon}) =  1 + (a - 1)(b - 1) .
    \]
    By Corollary \ref{sandwich_cor} and Theorem \ref{ssc_cmon_ord_cyc_th}, we obtain
    \[
        R_\mathrm{ref}(S_a^\mathrm{sc}, C_b^\mathrm{mon}) \le R_\mathrm{ord}(S_a^\mathrm{sc}, C_b^\mathrm{mon}) =  1 + (a - 1)(b - 1) .
    \]
    Therefore, $R_\mathrm{ref}(S_a^\mathrm{sc}, C_b^\mathrm{mon}) = 1 + (a - 1)(b - 1)$.
\end{proof}

\noindent
Hence, the ordered, reflective, cyclic and dihedral Ramsey numbers all coincide when one of the arguments is a start-central star, while the other is a monotone cycle.

We now turn to the reflective Ramsey numbers of start-central stars versus complete graphs. By Proposition~\ref{equiv_class}, the cyclic and dihedral Ramsey numbers for this combination of graphs coincide with the standard Ramsey numbers. We recall the following well-known result of Chvátal \cite{Chvatal1977}.

\begin{theorem}[\hspace{1sp}{\cite{Chvatal1977}}]\label{chvatal_cool_th}
    For any $a, b \in \mathbb{N}$ and tree $T$ of order $a$, we have $R(T, K_b) = 1 + (a - 1)(b - 1)$.
\end{theorem}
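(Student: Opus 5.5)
The plan is to prove the two inequalities separately. The lower bound comes from an explicit coloring, essentially the one used for Theorem~\ref{brutal_th}. The upper bound comes from a degeneracy argument, which replaces induction on $b$.

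For the lower bound, I would take $n = (a-1)(b-1)$ and partition $V(K_n)$ into $b-1$ blocks of size $a-1$. Edges inside a block get color~$1$ and edges between blocks get color~$2$.
\begin{itemize}
\item Every component of the color-$1$ graph is a $K_{a-1}$. Since $T$ is connected of order $a$, it cannot embed injectively in color~$1$.
\item The color-$2$ graph is complete $(b-1)$-partite. By pigeonhole, any $b$ vertices contain two from the same block, so there is no color-$2$ copy of $K_b$.
\end{itemize}
Hence $R(T, K_b) \ge 1 + (a-1)(b-1)$. The cases $a = 1$ or $b = 1$ are trivial and are set aside at the start.

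For the upper bound, I would first prove a greedy embedding lemma: every graph $G$ with minimum degree at least $a-1$ contains every tree $T$ of order $a$. The argument orders $V(T)$ so that each vertex after the first has exactly one earlier neighbour, its parent, for instance by breadth-first search. Vertices are then embedded one at a time. When we reach a vertex whose parent is mapped to $x$, at most $a-2$ vertices of $G$ are already used. Since $x$ has at least $a-1$ neighbours, one of them is free, and we map the new vertex there.

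Now fix a $2$-edge-coloring of $K_n$ with $n = 1 + (a-1)(b-1)$, and let $G$ be the color-$1$ graph. Suppose $G$ contains no copy of $T$.
\begin{enumerate}
\item By the lemma, every subgraph of $G$ has a vertex of degree at most $a-2$, so $G$ is $(a-2)$-degenerate.
\item Greedy coloring along a degeneracy ordering then properly colors $G$ with $a-1$ colors.
\item Some color class has at least $\lceil n/(a-1) \rceil = b$ vertices, since $n > (a-1)(b-1)$.
\item This class is independent in $G$, so its vertices span a $K_b$ in color~$2$.
\end{enumerate}

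The only step needing care is the embedding lemma, and in particular the choice of an ordering of $T$ in which every non-root vertex has exactly one earlier neighbour. With that in place, the degeneracy-to-coloring step is standard and the counting is immediate.
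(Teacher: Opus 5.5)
The paper gives no proof of this statement. It quotes it from Chvátal~\cite{Chvatal1977} and uses it only to obtain Corollary~\ref{ssc_k_cyc_dih_cor}. Your argument is correct and complete.

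Your lower-bound coloring is exactly the block coloring the paper uses in the proof of Theorem~\ref{brutal_th}.

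For the upper bound, the classical proof inducts on $b$. In a $2$-edge-coloring of $K_n$ with $n = 1 + (a-1)(b-1)$, suppose some vertex $v$ has at least $1 + (a-1)(b-2)$ neighbours in color~$2$. Applying the induction hypothesis to that neighbourhood gives a color-$1$ copy of $T$, or a color-$2$ copy of $K_{b-1}$ that $v$ extends to $K_b$. Otherwise every vertex has color-$1$ degree at least $a-1$, and the same greedy embedding you describe finds $T$.

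Your route replaces this induction with a different step: a $T$-free graph is $(a-2)$-degenerate, hence $(a-1)$-colorable, and you take the largest color class. Both proofs hinge on the same greedy tree-embedding lemma. Yours yields the slightly stronger, reusable fact that every $T$-free graph on $n$ vertices has chromatic number at most $a-1$, so independence number at least $\lceil n/(a-1) \rceil$. The inductive proof needs only a minimum-degree condition and no coloring argument.

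One small slip to fix in the embedding lemma. When the last vertex of $T$ is embedded, $a-1$ vertices of $G$ are already used, not $a-2$. What you need, and what holds, is that at most $a-2$ of the used vertices differ from $x$. So at most $a-2$ neighbours of $x$ are occupied, while $x$ has at least $a-1$.
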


\noindent
As an immediate consequence of Theorem \ref{chvatal_cool_th}, we obtain the following corollary.

\begin{corollary}\label{ssc_k_cyc_dih_cor}
    For any $a, b \in \mathbb{N}$, we have $R_\mathrm{cyc}(S^\mathrm{sc}_a, K_b) = R_\mathrm{dih}(S^\mathrm{sc}_a, K_b) = 1 + (a - 1)(b - 1)$.
\end{corollary}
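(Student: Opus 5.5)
The plan is to sandwich both numbers between the standard Ramsey number $R(S^\mathrm{sc}_a, K_b)$ and the value given by Chvátal's theorem. The corollary then follows from Corollary~\ref{sandwich_cor} and Proposition~\ref{equiv_class}.

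For the cyclic case, let $\Gamma$ be the cyclic group on $V(K_b)$. Since $K_b$ is invariant under every permutation, every graph that is $\Gamma$-isomorphic to $K_b$ is $K_b$ itself. The same holds for the full symmetric group. Hence $K_b$ lies in the same equivalence class under $\cong_\Gamma$ and under $\cong_{\mathrm{Sym}}$, namely $\{K_b\}$.

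The star needs a little more care. The equivalence class of $S^\mathrm{sc}_a$ under the symmetric group consists of all stars on $\{0,\ldots,a-1\}$ with an arbitrary centre $c$. Under the cyclic group, the shift $x \mapsto x + c \bmod a$ maps $S^\mathrm{sc}_a$ onto the star centred at $c$. Thus every such star is reached, and the two classes coincide. Note that for $a \le 2$ the star is $K_1$ or $K_2$, and the claim is trivial.

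Proposition~\ref{equiv_class}, applied with $\Lambda_j$ the symmetric groups and $\Gamma_j$ the cyclic groups, then gives $R_\mathrm{cyc}(S^\mathrm{sc}_a, K_b) = R(S^\mathrm{sc}_a, K_b)$. The dihedral group contains the cyclic group and is contained in the symmetric group, so the same reasoning gives $R_\mathrm{dih}(S^\mathrm{sc}_a, K_b) = R(S^\mathrm{sc}_a, K_b)$. Alternatively, Proposition~\ref{sandwich_prop} sandwiches the dihedral number between the cyclic and standard numbers.

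Finally, apply Theorem~\ref{chvatal_cool_th} with $T = S^\mathrm{sc}_a$, which is a tree of order $a$. This gives $R(S^\mathrm{sc}_a, K_b) = 1 + (a-1)(b-1)$.

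The only step requiring thought is verifying that the cyclic shifts reach every possible centre of the star, so that the equivalence classes genuinely coincide; once that is checked, the rest is immediate.
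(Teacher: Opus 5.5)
Your proof is correct and follows essentially the same route as the paper: Proposition~\ref{equiv_class} identifies both the cyclic and dihedral numbers with the standard Ramsey number $R(S^\mathrm{sc}_a, K_b)$, and Chv\'atal's theorem (Theorem~\ref{chvatal_cool_th}) then gives $1 + (a-1)(b-1)$; you also spell out the check that cyclic shifts reach stars with every centre, which the paper leaves implicit. The opening mention of Corollary~\ref{sandwich_cor} is superfluous, since the argument you actually carry out never uses it.
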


\noindent
The ordered Ramsey numbers of start-central stars versus complete graphs were also recently computed \cite{BaDamSteSto2026}.

\begin{theorem}[\hspace{1sp}{\cite[Theorem~4.24]{BaDamSteSto2026}}]\label{ssc_k_ord_th}
    For any $a, b \in \mathbb{N}$, we have $R_\mathrm{ord}(S^\mathrm{sc}_a, K_b) = 1 + (a - 1)(b - 1)$.
\end{theorem}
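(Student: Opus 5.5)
The plan is to prove the two inequalities separately: the lower bound comes for free from earlier results, and the upper bound from a greedy induction on $b$ that looks only at larger neighbors. The degenerate cases $\min\{a,b\}=1$ are trivial, since $R_\mathrm{ord}$ of anything versus a one-vertex graph is $1$. So I will assume $a,b \ge 2$ where convenient.

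\emph{Lower bound.} By Corollary~\ref{sandwich_cor}, $R(S^\mathrm{sc}_a, K_b) \le R_\mathrm{ord}(S^\mathrm{sc}_a, K_b)$. Since $S^\mathrm{sc}_a$ is a tree of order $a$, Theorem~\ref{chvatal_cool_th} gives $R(S^\mathrm{sc}_a, K_b) = 1 + (a-1)(b-1)$. Hence $R_\mathrm{ord}(S^\mathrm{sc}_a, K_b) \ge 1 + (a-1)(b-1)$, and no construction is needed.

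\emph{Upper bound.} I will prove by induction on $b$ the following claim: every $2$-edge-coloring of a complete graph on a linearly ordered vertex set of size $1 + (a-1)(b-1)$ contains either
\begin{itemize}
\item a vertex with at least $a-1$ larger neighbors in color $1$, which is exactly an increasing injective homomorphism of $S^\mathrm{sc}_a$ into color $1$, or
\item a $b$-clique in color $2$.
\end{itemize}
It is convenient to phrase the claim for arbitrary finite linearly ordered vertex sets, so that it applies to subsets after order-preserving relabeling. The second alternative suffices because $K_b$ is invariant under all permutations: any $b$-set spanning a color-$2$ clique, listed in increasing order, yields an increasing injective homomorphism of $K_b$.

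The base case $b=1$ is trivial, since one vertex is a $K_1$. For the inductive step, let $v$ be the smallest vertex and $n = 1 + (a-1)(b-1)$.
\begin{itemize}
\item If $v$ has at least $a-1$ color-$1$ neighbors, all of them are larger than $v$, and we are done.
\item Otherwise $v$ has at least $(n-1)-(a-2) = 1 + (a-1)(b-2)$ color-$2$ neighbors, all larger than $v$. Apply the induction hypothesis to the coloring induced on this set $W$ with its inherited order. A color-$1$ increasing star inside $W$ is also one in the whole graph. A color-$2$ copy of $K_{b-1}$ inside $W$, together with $v$, forms a color-$2$ $K_b$.
\end{itemize}
This gives $R_\mathrm{ord}(S^\mathrm{sc}_a, K_b) \le 1 + (a-1)(b-1)$.

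The only point needing care is in the inductive step. We must choose $v$ as the \emph{smallest} vertex, so that all its neighbors are larger; this makes a color-$1$ star at $v$ an ordered start-central star. We must also state the claim for arbitrary ordered vertex sets, so that it restricts cleanly to $W$. With these two choices the counting is immediate, and I do not expect a genuine obstacle.
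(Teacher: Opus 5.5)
Your proof is correct. The paper gives no proof of this statement; it only cites \cite{BaDamSteSto2026}, so there is no in-paper argument to match. Your lower bound is the same route the paper itself uses for Corollaries~\ref{ssc_k_cyc_dih_cor} and~\ref{ssc_k_ref_cor}: Theorem~\ref{chvatal_cool_th} combined with Corollary~\ref{sandwich_cor}.

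Your upper bound is Chvátal's induction on $b$ with one change. Chvátal concludes that \emph{every} vertex has color-$1$ degree at least $a-1$, which he needs to embed an arbitrary tree greedily. You examine only the smallest vertex, and that is exactly what an increasing copy of $S^\mathrm{sc}_a$ requires.

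One cosmetic fix: the set $W$ of color-$2$ neighbors of $v$ may have more than $1+(a-1)(b-2)$ elements. State the claim for ordered sets of at least $1+(a-1)(b-1)$ elements, or pass to a subset of exactly the required size, before invoking the induction hypothesis.
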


\noindent
We now obtain the following proposition from Theorem \ref{ssc_k_ord_th}, Corollary \ref{ssc_k_cyc_dih_cor} and Proposition \ref{sandwich_prop}.

\begin{corollary}\label{ssc_k_ref_cor}
    For any $a, b \in \mathbb{N}$, we have $R_\mathrm{ref}(S^\mathrm{sc}_a, K_b) = 1 + (a - 1)(b - 1)$.
\end{corollary}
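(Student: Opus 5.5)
The plan is a sandwich argument between the dihedral and ordered Ramsey numbers, using Proposition~\ref{sandwich_prop}. Throughout, $\Gamma$ denotes the permutation groups associated with the two arguments.

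\emph{Upper bound.} The reflection group on $V(H)$ contains the trivial group. Hence Proposition~\ref{sandwich_prop} (equivalently, the right-hand inequality of Corollary~\ref{sandwich_cor}) gives
\[
R_\mathrm{ref}(S^\mathrm{sc}_a, K_b) \le R_\mathrm{ord}(S^\mathrm{sc}_a, K_b).
\]
By Theorem~\ref{ssc_k_ord_th}, the right-hand side equals $1 + (a-1)(b-1)$.

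\emph{Lower bound.} The reflection group is a subgroup of the dihedral group generated by the cyclic shift and the reflection. Applying Proposition~\ref{sandwich_prop} to both arguments therefore yields
\[
R_\mathrm{dih}(S^\mathrm{sc}_a, K_b) \le R_\mathrm{ref}(S^\mathrm{sc}_a, K_b).
\]
By Corollary~\ref{ssc_k_cyc_dih_cor}, the left-hand side equals $1 + (a-1)(b-1)$.

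Combining the two bounds gives the claimed equality. The only point to check is that the group inclusions used in Proposition~\ref{sandwich_prop} hold. The trivial group is contained in the reflection group, which is contained in the dihedral group, and this holds for both $S^\mathrm{sc}_a$ and $K_b$ separately. For $K_b$ the choice of group is in fact irrelevant, since every permutation is an automorphism of $K_b$. I expect no real obstacle here. One could alternatively take the lower bound directly from the standard Ramsey number via Theorem~\ref{chvatal_cool_th} and the left inequality of Corollary~\ref{sandwich_cor}, since $S^\mathrm{sc}_a$ is a tree.
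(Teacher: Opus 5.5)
Your proposal is correct and follows essentially the same route as the paper. The paper likewise sandwiches $R_\mathrm{ref}(S^\mathrm{sc}_a, K_b)$ between $R_\mathrm{dih}(S^\mathrm{sc}_a, K_b)$ (Corollary~\ref{ssc_k_cyc_dih_cor}) and $R_\mathrm{ord}(S^\mathrm{sc}_a, K_b)$ (Theorem~\ref{ssc_k_ord_th}), using Proposition~\ref{sandwich_prop} and the inclusions trivial $\le$ reflection $\le$ dihedral.
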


\noindent
Therefore, the ordered, reflective, cyclic and dihedral Ramsey numbers all coincide for start-central stars versus complete graphs.

We conclude this section by considering the reflective Ramsey numbers of start-central stars versus nested matchings. In this case, the reflective Ramsey numbers appear to behave in a complicated manner; see Table~\ref{ssc_mnest_ref_tab}. Nonetheless, these numbers seem to be close to the corresponding ordered Ramsey numbers.

\begin{table}[H]
\centering
\resizebox{0.83\textwidth}{!}{
\begin{tabular}{|c||rrrrrrrrrrrrrr|}
\hline
\backslashbox{$a$}{$b$} & $4$ & $6$ & $8$ & $10$ & $12$ & $14$ & $16$ & $18$ & $20$ & $22$ & $24$ & $26$ & $28$ & $30$\\
\hline
\hline
$3$ & $6$ & $8$ & $11$ & $13$ & $15$ & $18$ & $20$ & $22$ & $24$ & $27$ & $29$ & $31$ & $33$ & $35$\\
$4$ & $7$ & $10$ & $12$ & $15$ & $18$ & $20$ & $22$ & $25$ & $\ge 27$ & $\ge 29$ & $\ge 31$ & & &\\
$5$ & $8$ & $11$ & $14$ & $17$ & $20$ & $22$ & $\ge 24$ & $\ge 27$ & $\ge 29$ & & & & &\\
$6$ & $10$ & $13$ & $16$ & $19$ & $21$ & $\ge 24$ & $\ge 26$ & $\ge 28$ & & & & & &\\
$7$ & $11$ & $14$ & $17$ & $20$ & $\ge 23$ & $\ge 25$ & $\ge 28$ & & & & & & &\\
$8$ & $12$ & $15$ & $19$ & $22$ & $\ge 24$ & & & & & & & & &\\
$9$ & $13$ & $17$ & $20$ & $\ge 23$ & $\ge 26$ & & & & & & & & &\\
$10$ & $15$ & $18$ & $22$ & $\ge 25$ & & & & & & & & & &\\
$11$ & $16$ & $20$ & $23$ & $\ge 26$ & & & & & & & & & &\\
$12$ & $17$ & $21$ & $\ge 24$ & & & & & & & & & & &\\
$13$ & $18$ & $22$ & $\ge 25$ & & & & & & & & & & &\\
$14$ & $19$ & $23$ & $\ge 27$ & & & & & & & & & & &\\
$15$ & $21$ & $25$ & & & & & & & & & & & &\\
$16$ & $22$ & $26$ & & & & & & & & & & & &\\
$17$ & $23$ & $27$ & & & & & & & & & & & &\\
$18$ & $24$ & $\ge 28$ & & & & & & & & & & & &\\
$19$ & $25$ & $\ge 29$ & & & & & & & & & & & &\\
$20$ & $26$ & $\ge 30$ & & & & & & & & & & & &\\
$21$ & $28$ & & & & & & & & & & & & &\\
$22$ & $29$ & & & & & & & & & & & & &\\
$23$ & $30$ & & & & & & & & & & & & &\\
$24$ & $31$ & & & & & & & & & & & & &\\
$25$ & $32$ & & & & & & & & & & & & &\\
$26$ & $33$ & & & & & & & & & & & & &\\
$27$ & $34$ & & & & & & & & & & & & &\\
$28$ & $\ge 36$ & & & & & & & & & & & & &\\
$29$ & $\ge 37$ & & & & & & & & & & & & &\\
$30$ & $\ge 38$ & & & & & & & & & & & & &\\
\hline
\end{tabular}
}
\caption{Reflective Ramsey numbers $R_\mathrm{ref}(S^\mathrm{sc}_a, M^\mathrm{nest}_b)$ with any $3 \le a \le 30$ and even $4 \le b \le 30$.}
\label{ssc_mnest_ref_tab}
\end{table}

\section{Conclusion}\label{sc:conc}

We have shown how the SAT-based approach originally proposed by Poljak \cite{Poljak2020} for ordered Ramsey numbers and recently extended to cyclic Ramsey numbers \cite{BaDamSteSto2026} can be generalized to cover permutational Ramsey numbers with arbitrary associated permutation groups. Using this approach, we have computed small two-color reflective and dihedral Ramsey numbers for the following graph classes: monotone and alternating paths, monotone cycles, start-central stars, complete graphs and nested matchings. The extensive computation was automated through a pipeline of four \texttt{Python} modules based on the source code from \cite{BaDamSteSto2026Repo}, with significant optimizations made to improve the API and remove redundancy. We have also established several general results and formulated conjectures motivated by the computational findings.

As it turns out, although capable of obtaining lower bounds for small Ramsey numbers of both reflective and dihedral types, the SAT-based approach appears to be more efficient at providing upper bounds for reflective Ramsey numbers than for dihedral Ramsey numbers. This can be explained by the fact that the generated SAT problem has a number of clauses that grows linearly with the order of each associated permutation group. On the other hand, by
Proposition~\ref{sandwich_prop}, larger associated permutation
groups generally lead to smaller permutational Ramsey numbers.
Consequently, increasing the order of the associated permutation groups has two competing effects: while the generated SAT instances become more difficult to solve for fixed $n \in \mathbb{N}$, the Ramsey numbers themselves tend to decrease, reducing the values of $n \in \mathbb{N}$ that need to be considered. This observation is also supported by the experimental findings of \cite{BaDamSteSto2026}, which show that for fixed $n \in \mathbb{N}$, SAT problems corresponding to ordered Ramsey numbers are generally easier to solve than those corresponding to cyclic Ramsey numbers. Our computational results further indicate that the reflective Ramsey numbers are often very close to the corresponding ordered Ramsey numbers, while the dihedral Ramsey numbers frequently coincide with the cyclic Ramsey numbers when one of the arguments is an alternating path.

We conclude with the remark that it may be worthwhile to investigate more general permutational Ramsey numbers in which the permutation groups associated with graphs are tailored to the corresponding graphs. For instance, let $n \in \mathbb{N}$, and consider the \emph{monotone fan} $F^\mathrm{mon}_n$, defined as the fan graph in which $0$ is adjacent to all the other vertices, while any two nonzero vertices are adjacent if and only if they are consecutive integers. Equivalently, $F^\mathrm{mon}_n$ is the graph of order $n$ with edge set $E(S^\mathrm{sc}_n) \cup E(P^\mathrm{mon}_n)$. Since $0$ is a universal vertex, it is natural to associate this graph with a permutation group that fixes $0$. For example, in analogy with the cyclic shift permutation, one could consider the permutation group $\left\langle \begin{pmatrix}\begin{smallmatrix}
    0 & 1 & 2 & \cdots & n - 2 & n - 1\\
    0 & 2 & 3 & \cdots & n - 1 & 1
\end{smallmatrix}\end{pmatrix} \right\rangle$. This example illustrates one possible way in which the associated permutation groups can be chosen according to the structure of the underlying graphs. More generally, this motivates the study of permutational Ramsey numbers with arbitrary choices of associated permutation groups. Theorem~\ref{brutal_th} illustrates that allowing arbitrary associated permutation groups naturally leads to general results in certain cases. We believe that this could provide a fruitful direction for future research.

\section*{Acknowledgments}

I.\ Damnjanović is supported by the Ministry of Science, Technological Development and Innovation of the Republic of Serbia, grant number 451-03-34/2026-03/200102, and the Science Fund of the Republic of Serbia, grant \#6767, Lazy walk counts and spectral radius of threshold graphs --- LZWK.

\section*{Conflict of interest}

The authors declare that they have no conflict of interest.

\end{document}